\documentclass[12pt,letterpaper]{amsart}
\usepackage{euler, epic,eepic,latexsym, amssymb, amscd, amsfonts, xypic, url, color, epsfig}
\usepackage{epigraph}
 \newlength{\baseunit}               
 \newcount{\numlines}                
 \setlength{\baseunit}{0.05ex}
  
\setlength{\oddsidemargin}{0cm} \setlength{\evensidemargin}{0cm}
\setlength{\marginparwidth}{0in}
\setlength{\marginparsep}{0in}
\setlength{\marginparpush}{0in}
\setlength{\topmargin}{0in}
\setlength{\headheight}{0pt}
\setlength{\headsep}{0pt}
\setlength{\footskip}{.3in}
\setlength{\textheight}{9.2in}
\setlength{\textwidth}{6.5in}
\setlength{\parskip}{4pt}

\setlength\epigraphwidth{14cm}
\setlength\epigraphrule{0pt}

\newtheorem{tm}{Theorem}[section]
\newtheorem{pr}[tm]{Proposition}
\newtheorem{lm}[tm]{Lemma}
\newtheorem{co}[tm]{Corollary}
\newtheorem{df}[tm]{Definition}
\newtheorem{rmk}[tm]{Remark}


\newcommand{\cat}[1]{{\mathbf{#1}}}


\newcommand{\GW}{\mathrm{GW}}




\newcommand{\rH}{\operatorname{H}}


\newcommand{\Z}{\mathbb{Z}}

\newcommand{\Q}{\mathbb{Q}}

\newcommand{\kbar}{\overline{k}}
\newcommand{\R}{\mathbb{R}}

\newcommand{\C}{\mathbb{C}}

\newcommand{\End}{\operatorname{End}} 

\newcommand{\Gal}{\operatorname{Gal}}

\newcommand{\sSet}{\cat{sSet}}

\newcommand{\Sheaves}{\cat{Sh}}
\newcommand{\Sm}{\cat{Sm}} 
 
\newcommand{\spaces}[1]{\cat{sPre}(\Sm_{#1})} 
\newcommand{\Spt}{\cat{Spt}} 


\newcommand\bbb[1]{\ensuremath{{\mathbf{#1}}}}

\newcommand{\assoc}{\theta}
\newcommand{\A}{\operatorname{A}}
\newcommand{\Tr}{\operatorname{Tr}}

\newcommand{\Et}{\operatorname{Et}}

\newcommand{\Spec}{\operatorname{Spec}}
\newcommand{\Rep}{\operatorname{Rep}}
\newcommand{\Burn}{\operatorname{Burn}}
\newcommand{\GL}{\operatorname{GL}}

\newcommand{\Ker}{\operatorname{Ker}}
\newcommand{\Image}{\operatorname{Image}}
\newcommand{\Sphere}{\bbb{S}}

\newcommand{\et}{{\text{\'et}}}


\newcommand{\calH}{{ \mathcal H}}

\newcommand{\hidden}[1]{\footnote{Hidden:  #1}}
\renewcommand{\hidden}[1]{}

\begin{document}
\pagestyle{plain}
\title{An \'Etale Realization which does NOT exist}

\author{Jesse Leo Kass}

\address{Current: J.~L.~Kass, Dept.~of Mathematics, University of South Carolina, 1523 Greene Street, Columbia, SC 29208, United States of America}
\email{kassj@math.sc.edu}
\urladdr{http://people.math.sc.edu/kassj/}

\author{Kirsten Wickelgren}

\address{Current: K.~Wickelgren, School of Mathematics, Georgia Institute of Technology, 686 Cherry Street, Atlanta, GA 30332-0160}
\email{kwickelgren3@math.gatech.edu}
\urladdr{http://people.math.gatech.edu/~kwickelgren3/}

\subjclass[2010]{Primary 14F42; Secondary 55P91,14F05.}

\date{\today}

\begin{abstract}
For a global field, local field, or finite field $k$ with infinite Galois group, we show that there can not exist a functor from the Morel--Voevodsky $\bbb{A}^1$-homotopy category of schemes over $k$ to a genuine Galois equivariant homotopy category satisfying a list of hypotheses one might expect from a genuine equivariant category and an \'etale realization functor. For example, these hypotheses are satisfied by genuine $\Z/2$-spaces and the $\bbb{R}$-realization functor constructed by Morel--Voevodsky. This result does not contradict the existence of \'etale realization functors to (pro-)spaces, (pro-)spectra or complexes of modules with actions of the absolute Galois group when the endomorphisms of the unit is not enriched in a certain sense. It does restrict enrichments to representation rings of Galois groups.
\end{abstract}
\maketitle

{\parskip=12pt 

\section{Introduction}

Grothendieck envisioned that cohomological invariants of algebraic varieties are controlled by certain motives, with transcendental invariants, such as the Galois action on \'etale cohomology, being recovered by realization functors. Work of Morel--Voevodsky on $\bbb{A}^1$-homotopy theory provides candidate categories of motives, and realization functors have been constructed from the Morel--Voevodsky $\bbb{A}^1$-homotopy category and its stabilization \cite{morelvoevodsky1998} \cite{Isaksen-etale_realization} \cite{Quick-stable_realization} \cite{Ayoub-realization_etale}, \cite[Section~7.2]{CD-etale_motives}. 

For example, Morel--Voevodsky construct an $\bbb{R}$-realization functor \cite[3.3]{morelvoevodsky1998}. On the level of schemes, this functor takes an $\bbb{R}$-scheme $X$ to the topological space (or simplicial set) of its complex points $X(\bbb{C})$ together with the action of the Galois group $\Gal(\bbb{C}/\bbb{R})$ on $X(\bbb{C})$. The association $X \mapsto X(\bbb{C})$ gives rise to functors from the $\bbb{A}^1$-homotopy categories of spaces and of $\bbb{P}^1$-spectra. The target category of this functor may be taken to be the genuine $\Gal(\bbb{C}/\bbb{R})$-homotopy category in the former case and the genuine $\Gal(\bbb{C}/\bbb{R})$-equivariant stable homotopy category in the latter. The adjective genuine (or {\em fine} in the terminology of \cite[3.3]{morelvoevodsky1998}) refers to the homotopy theory in which a weak equivalence of $\Gal(\bbb{C}/\bbb{R})$-spaces $X \to Y$ is not only an equivariant map which is a weak equivalence in the non-equivariant sense, but also satisfies the requirement that the map on fixed points $X^H \to Y^H$ is an equivalence for all subgroups of the group acting on the spaces, in this case $\Gal(\bbb{C}/\bbb{R})$ \cite{LewisMaySteinberger}.  In the stable case,  more notation would be required to define genuine $\Gal(\bbb{C}/\bbb{R})$-equivariant spectra, but one consequence of the more stringent notion of weak equivalence is that the group of endomorphisms of the sphere spectrum encodes interesting information about $\Gal(\bbb{C}/\bbb{R})$-sets, in contrast to $\bbb{Z}$, which is isomorphic to the endomorphisms of the sphere in ordinary stable homotopy. It follows that the Euler characteristic in genuine equivariant homotopy theory can be enriched from an element of $\bbb{Z}$ to an element of the Burnside ring $\A(\Gal(\bbb{C}/\bbb{R}))$ of formal differences of finite $\Gal(\bbb{C}/\bbb{R})$-sets. 

One might hope to generalize this construction to fields $k$ with infinite Galois groups. Namely, let $G=\Gal_k=\Gal(\kbar/k)$ denote the absolute Galois group of $k$. One could hope to construct a category of genuine $G$-spectra or $G$-pro-spectra receiving an \'etale realization functor from $\bbb{P}^1$-spectra over $k$. Indeed, for profinite groups, homotopy theories of genuine $G$-(pro)-spectra are constructed in \cite{Fausk} and \cite{BarwickI}, and \cite{Quick-Profinite_G-spectra} contains a construction of a homotopy theory of profinite $G$-equivariant spectra. \'Etale realization functors have also been constructed over $k$ \cite{Isaksen-etale_realization} \cite{Quick-stable_realization} \cite{Ayoub-realization_etale}, but not so that the target is a genuine $G$-equivariant homotopy theory. 

We show here that there is a reason for this lacuna. Namely, for $k$ a global field, local field, or finite field with infinite Galois group, there does not exist a symmetric monoidal \'etale realization functor from $\bbb{P}^1$-spectra over $k$ to a genuine $G$-equivariant homotopy theory under a set of hypothesis on what the terms {\em \'etale realization} and {\em genuine} are expected to imply.  The hypotheses are best explained by the proof, and the proof is based on a result of Hoyois \cite[Theorem 1.5]{Hoyois_lef}. Namely, Hoyois shows that the $\bbb{A}^1$-Euler characteristic of a smooth proper variety with trivial cotangent sheaf is $0$, which implies, in particular, that the $\bbb{A}^1$-Euler characteristic of an elliptic curve $E$ is $0$. The outline of the proof is as follows. Suppose there is a symmetric monoidal \'etale realization functor $L \Et$. Then the Euler characteristic of $L \Et (E)$ is well-defined and equal to $0$. On the other hand, in genuine $G$-spectra, one could expect that the Euler characteristic is related to the alternating sum of the cohomology groups (with coefficients in some ring or field) considered with their $G$-actions. For example, this is the case for $G$ a finite group (Proposition \ref{assocchiX=repchi}). The theory of weights for Galois representations shows that the alternating sum of the \'etale cohomology groups for $E$ is non-zero in a representation ring. Since one could expect the cohomology groups of the \'etale realization to be the \'etale cohomology groups, at least for finite coefficients (as in \cite[Proposition 5.9]{Friedlander}), this is a contradiction.

A precise formulation is as follows. First, we introduce a definition. Let $k$ be a field and let $R$ be a ring. Let $\operatorname{Funct}(\Gal_k, R)$ denote the ring of functions $\Gal_k \to R$ with point-wise addition and multiplication.

\begin{df}\label{df:representation_ring}
By a {\em representation ring} of $\Gal_k$ with coefficients in $R$, we mean a ring $\Rep( \Gal_k, R)$ such that: 
\begin{enumerate}
\item The isomorphism class of a finitely generated $R$-module $A$ with a continuous $\Gal_k$-action given by $A = \rH^i_{\et}(X_{\kbar}, R)$, where $X$ is a smooth, proper variety over $k$, determines an element $[A]$ of $\Rep( \Gal_k, R)$. 
\item The trace map which takes $A$ to the function $\Gal_k \to R$ defined by taking $g$ in $G$ to the trace of $g$ acting on $A$, $$g \mapsto \Tr g \vert A$$ extends to a homomorphism $\Rep(\Gal_k, R) \to \operatorname{Funct}(\Gal_k, R)$.
\end{enumerate}
\end{df}

\begin{tm}\label{into:mainthm}

Let $k$ be a global field, local field with infinite Galois group, or finite field. Let $p$ be a prime different from the characteristic of $k$. It is impossible to simultaneously construct all of the following \begin{enumerate}
\item \label{mainthm_HgalK} A symmetric monoidal category $\calH_{\Gal_k}(\Spt)$, enriched over abelian groups. Let $\A(\Gal_k)= \End_{\calH_{\Gal_k}(\Spt)}(\Sphere)$ denote the endomorphisms of the symmetric monoidal unit $\Sphere$. 

\item \label{mainthm_Rep} For all $n$, a representation ring $\Rep( \Gal_k, \Z/{p^n})$ with coefficients in $\Z/{p^n}$. 

\item \label{mainthm_realization} A symmetric monoidal additive functor $L \Et: \calH_{\bbb{A}^1}(\Spt^{\bbb{P}^1}(k)) \to \calH_{\Gal_k}(\Spt)$, and ring homomorphisms $\assoc_n: \A(G) \to \Rep(\Gal_k, \Z/p^n)$ such that $$\assoc_n \chi (L\Et \Sigma^{\infty}_{\bbb{P}^1}X_+) = \sum_{i=-\infty}^{\infty} (-1)^i [H^i_{\et} (X_{\kbar}, \Z/p^n)]$$ for smooth proper schemes $X$ over $k$.
\end{enumerate}
\end{tm}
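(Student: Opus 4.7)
The strategy is to assume such a triple $(\calH_{\Gal_k}(\Spt), \Rep(\Gal_k, \Z/p^n), L\Et)$ exists and derive a contradiction by computing the categorical Euler characteristic of an elliptic curve $E/k$ in two incompatible ways. Every field under consideration admits an elliptic curve, so fix one. The cotangent sheaf of $E$ is trivial, so by Hoyois's theorem \cite[Theorem 1.5]{Hoyois_lef} (as invoked in the introduction), the $\bbb{A}^1$-Euler characteristic satisfies $\chi(\Sigma^\infty_{\proj^1} E_+) = 0$ in $\End_{\SH(k)}(\Sphere)$.

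Smooth proper schemes are dualizable in $\SH(k)$, and the categorical Euler characteristic of a dualizable object is defined purely from the symmetric monoidal structure (as the composite of coevaluation, symmetry, and evaluation), hence preserved by any symmetric monoidal functor. Applying the assumed $L\Et$ therefore yields $\chi(L\Et \Sigma^\infty_{\proj^1} E_+) = 0$ in $\A(\Gal_k)$. Pushing this through the ring homomorphism $\assoc_n$ and invoking the compatibility of hypothesis \ref{mainthm_realization} gives
$$0 \;=\; \sum_i (-1)^i [H^i_\et(E_{\kbar}, \Z/p^n)]$$
in $\Rep(\Gal_k, \Z/p^n)$. Post-composing with the trace map of Definition \ref{df:representation_ring} forces, for \emph{every} $g \in \Gal_k$,
$$\sum_i (-1)^i \Tr\bigl(g \mid H^i_\et(E_{\kbar}, \Z/p^n)\bigr) \;=\; 0 \in \Z/p^n.$$

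The main obstacle is to exhibit a choice of $(E,g)$ for which this alternating trace is in fact nonzero modulo $p^n$, contradicting the displayed identity. When $k = \F_q$, I would take $g$ to be an appropriate power of Frobenius; the classical Lefschetz trace formula (valid with $\Z/p^n$ coefficients since $p \neq \operatorname{char} k$) identifies the above sum with $\#E(\F_{q^m}) \pmod{p^n}$, and the Hasse--Weil distribution of point counts allows the choice of $E$ (or an unramified twist) and $m$ so that this is nonzero. For $k$ a global or local field with residue characteristic $\neq p$, I would pick $E/k$ with good reduction at a prime $v$, let $g$ be a Frobenius lift at $v$, and use smooth-proper base change to identify $H^i_\et(E_{\kbar},\Z/p^n)$ with $H^i_\et(E_{v,\overline{\F}_{q_v}},\Z/p^n)$ as a Frobenius-module. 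This reduces to the finite-field computation for $E_v$, where choosing $v$ (in the global case) or $m$ (using that $\Gal_k$ is infinite in the local case, so arbitrarily large unramified powers of Frobenius are available) makes $\#E_v(\F_{q_v^m})$ nonzero modulo $p^n$. The technical heart of the argument is thus a case-by-case verification that such an $(E,g)$ exists for each of the three families of fields.
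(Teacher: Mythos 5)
Your overall strategy matches the paper's: assume the triple exists, invoke Hoyois's theorem to get $\chi(\Sigma^\infty_{\proj^1}E_+)=0$, push through $L\Et$, $\assoc_n$, and the trace map from Definition~\ref{df:representation_ring} to conclude $\sum_i(-1)^i\Tr(g\mid H^i_\et(E_{\kbar},\Z/p^n))=0$ for every $g$ and every $n$, and then reduce the local and global cases to the finite-field case via good reduction and smooth-proper base change. This is exactly the paper's architecture for Theorem~\ref{tm:nonexistence}.

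Where you diverge is the finite-field heart of the argument, and there is a gap in the way you state it. The paper's Lemma~\ref{chirepEFellneq0pn} works directly with the Frobenius eigenvalues: if the alternating trace vanished in $\Z/p^n$ for all $n$, one could take an inverse limit and tensor with $\Q_p$ to get $a_1 + a_2 = 1 + \ell$ with $|a_i| = \ell^{1/2}$ (the Weil bound), and taking complex absolute values forces $2\ell^{1/2} \geq 1 + \ell$, false for every prime power $\ell$. You instead invoke the Lefschetz trace formula with $\Z/p^n$ coefficients to rewrite the alternating trace for $g = F^m$ as $\#E(\F_{q^m})\bmod p^n$; this is the route the paper uses in its \emph{second} variant, Lemma~\ref{chirepEFellneq0p}, but there the prime $p$ is chosen \emph{after} fixing $E$, while in the present theorem $p$ is fixed in advance. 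Your closing claim that ``the Hasse--Weil distribution of point counts allows the choice of $E$ (or an unramified twist) and $m$ so that this is nonzero'' is the unjustified step: it is not obvious that varying $m$ or passing to a twist avoids $\#E(\F_{q^m})$ being a power of $p$. The gap is, however, easily closed, and in a simpler way than you propose: you should vary $n$, not $m$ or $E$. Take $g = F$ itself; then $\#E(\F_q)$ is a fixed positive integer (every elliptic curve has a rational point), so for $n$ large enough that $p^n > \#E(\F_q)$ the class of $\#E(\F_q)$ in $\Z/p^n$ is nonzero. Alternatively, adopt the paper's eigenvalue argument, which avoids appealing to a mod-$p^n$ Lefschetz formula altogether. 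Either patch makes your proof complete.
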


We are thinking of $\calH_{\Gal_k}(\Spt)$ as a genuine $\Gal_k$-equivariant stable homotopy category, $\A(\Gal_k)$ as a Burnside ring of $\Gal_k$, $\Rep( \Gal_k, \Z/{p^n})$ as a representation ring of $\Z/(p^n)$-modules with $\Gal_k$-action, and $L \Et$ as an \'etale realization functor. Since there are interesting contexts in which one may require a continuous Galois representation to satisfy additional conditions (e.g., be de Rham, crystalline, semi-stable, potentially semi-stable, etc.), we do not require that $\Rep( \Gal_k, \Z/{p^n})$ be the Grothendieck ring of continuous Galois representations, although this is certainly a possibility for \eqref{mainthm_Rep}. Theorem \ref{into:mainthm} is proven as Theorem \ref{tm:nonexistence} below and there is another variant given in Theorem \ref{tm:nonexistencev2}, where the representation rings involved only have coefficients in fields.

In Section \ref{GR-realization}, we show that the constructions of Theorem \ref{into:mainthm} do exist when $k=\bbb{R}$, the realization functor \eqref{mainthm_realization} being that of Morel--Voevodsky \cite[3.3]{morelvoevodsky1998}, and \eqref{mainthm_HgalK} and \eqref{mainthm_Rep} being the usual genuine $\Z/2$-equivariant stable homotopy category and $\Z/2$-representation rings, respectively. Section \ref{Gk-realization} contains the main result. As described above, the basic idea of the proof is that the $\bbb{A}^1$-Euler characteristic of an elliptic curve and the alternating sum of its \'etale cohomology groups are incompatible as Galois representations.
 
\section{Enriched Euler characteristics and genuine $\Gal_{\bbb{R}}$-realization}\label{GR-realization}

Let $G$ be a finite group. Let $\A(G)$ denote the Burnside ring, defined as the group completion (or Grothendieck group) of the semi-ring of isomorphism classes of finite $G$-sets with addition and multiplication given by disjoint union and product respectively. 

The Burnside ring is the endomorphisms of the monoidal unit in the Burnside category $\Burn(G)$, whose definition we now recall. The objects of $\Burn(G)$ are finite $G$-sets. Given finite $G$-sets $T$ and $S$, consider the set of equivalence classes of diagrams of $G$-sets $$S \leftarrow U \rightarrow T$$ under the equivalence relation $[S \leftarrow U \rightarrow T]\sim[S \leftarrow U' \rightarrow T] $ when there is a $G$-equivariant bijection $\alpha: U \to U'$ such that the diagram $$\xymatrix{ & \ar[dl] U \ar[dd]_{\cong}^{\alpha} \ar[dr]& \\ S & &T \\ & \ar[ul] U' \ar[ur]&}$$ commutes. The morphisms $\Burn(G)(S,T)$ from $S$ to $T$ are the elements of the group completion of this set of equivalence classes under the operation $$[S \leftarrow U \rightarrow T] + [S \leftarrow U' \rightarrow T] = [S \leftarrow U \coprod U' \rightarrow T].$$ Composition of morphisms $S \leftarrow U \rightarrow T$ and $T \leftarrow U' \rightarrow Q$ is given by pullback \begin{equation}\label{BurnGcomp}\xymatrix{ && \ar[dl] V \ar[dr] && \\& \ar[dr]\ar[dl]U& &U' \ar[dr]\ar[dl] &\\ S&& T&& Q}\end{equation} and the composition $$\Burn(G)(S,T)\times \Burn(G)(T,Q) \to \Burn(G)(S,Q)$$ is defined by \eqref{BurnGcomp} and bilinearity. The symmetric monoidal structure takes $G$-sets $S$ and $T$ to their product $S \times T$, so $\A(G) = \Burn(G)(\ast, \ast)$, where $\ast$ denotes the one-point $G$-set.

An object $X$ of a symmetric monoidal category with unit $\Sphere$ and monoidal product $\wedge$ is said to be fully dualizable if there is a dual object $DX$ together with coevaluation and evaluation morphisms $$\eta: \Sphere \to X \wedge DX, ~~\epsilon: DX \wedge X \to \Sphere  $$ such that the compositions $$X \stackrel{\eta \wedge 1_X}{\to}  X \wedge DX \wedge X \stackrel{1_X \wedge \epsilon}{\to} X ,$$ $$DX \stackrel{1_{DX} \wedge  \eta }{\to}  DX \wedge X \wedge DX \stackrel{ \epsilon \wedge 1_{DX}}{\to} DX $$ are the identities. For example, every element of $\Burn(G)$ is fully dualizable with $DX = X$. 

Given an endomorphism $f$ of  a fully dualizable object $X$, the trace $\Tr (f)$ is the endomorphism of $\Sphere$ determined by the composition $$ \Sphere \stackrel{\eta}{\to} X \wedge DX \stackrel{f \wedge 1_{DX} }{\to} X \wedge DX\stackrel{\tau}{\to} DX \wedge X \stackrel{\epsilon}{\to} \Sphere,$$ where $\tau$ is the map which switches the order of the two factors. The Euler characteristic $\chi(X)$ is the trace of the identity $\chi(X) = \Tr (1_X)$. The following is a straightforward consequence of the definitions and is not original, but it is included for convenience and clarity. 

\begin{lm}\label{lm:TrTUT} Let $T$ and $U$ be finite $G$-sets, and let $T \stackrel{i}{\leftarrow} U \stackrel{j}{\rightarrow} T$ be a diagram of finite $G$-sets.  For elements $t$ and $t'$ in $T$, let $U_{t,t'}$ be the set $U_{t,t'} = i^{-1}(t) \cap j^{-1}(t')$. Then there are the following equalities in the Burnside ring $\A(G)$.
\begin{enumerate}
\item\label{TrUinBurnG} $\Tr[T \leftarrow U \rightarrow T] = \coprod_{t \in T} U_{t,t}$.
\item\label{chiT=T} $\chi (T) = T$.
\end{enumerate}
\end{lm}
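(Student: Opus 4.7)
The plan is to compute the trace directly from its definition in the Burnside category, which reduces to a pullback calculation. First I would fix the duality data: every finite $G$-set $T$ is self-dual in $\Burn(G)$ with $DT = T$, coevaluation $\eta = [\ast \leftarrow T \xrightarrow{\Delta} T \times T]$ and evaluation $\epsilon = [T \times T \xleftarrow{\Delta} T \to \ast]$, where $\Delta$ is the diagonal $G$-map. One checks the two zig-zag identities by a direct pullback computation: the relevant fiber products reduce to $T$ itself, yielding the identity span $T \leftarrow T \to T$. This step is routine and the key input is just that the diagonal is a monomorphism of $G$-sets.

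Given these data, for (1) I would plug the span $f = [T \xleftarrow{i} U \xrightarrow{j} T]$ into the defining composition
\[
\Sphere \xrightarrow{\eta} T \wedge DT \xrightarrow{f \wedge 1} T \wedge DT \xrightarrow{\tau} DT \wedge T \xrightarrow{\epsilon} \Sphere
\]
and compute each intermediate span by iterated pullback using formula \eqref{BurnGcomp}. Tracking the apex object through the sequence of pullbacks, the composite span from $\ast$ to $\ast$ has apex the equalizer of $i, j \colon U \rightrightarrows T$, namely $\{u \in U : i(u) = j(u)\}$. Partitioning this equalizer by the common value in $T$ gives the decomposition $\coprod_{t \in T} U_{t,t}$ as an element of $\A(G) = \Burn(G)(\ast,\ast)$. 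Part (2) is then immediate: the identity of $T$ in $\Burn(G)$ is the span $[T \xleftarrow{\mathrm{id}} T \xrightarrow{\mathrm{id}} T]$, and applying (1) with $U = T$ and $i = j = \mathrm{id}$ gives $U_{t,t} = \{t\}$ for each $t \in T$, so $\chi(T) = \coprod_{t \in T} \{t\} = T$.

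The main obstacle, such as it is, is bookkeeping rather than conceptual: one has to keep track of the order of factors after applying the swap $\tau$ and verify that the iterated pullback really collapses to the equalizer, not to some larger fiber product that remembers extra data. Beyond this, the proof is entirely formal, and I would present it by drawing the single composite pullback diagram realizing $\epsilon \circ \tau \circ (f \wedge 1) \circ \eta$ and reading off the equalizer from its universal property.
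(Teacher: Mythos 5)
Your proposal follows essentially the same route as the paper: identify the self-duality data for $T$ via the diagonal, compute the composite $\epsilon \circ \tau \circ (f \wedge 1) \circ \eta$ by iterated pullback, and observe that the resulting apex is the equalizer $\{u \in U : i(u) = j(u)\} = \coprod_{t\in T} U_{t,t}$, with part (2) following by taking $U=T$, $i=j=\mathrm{id}$. The paper simply writes out the intermediate pullback diagram explicitly rather than describing it in words, but the argument is the same.
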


\begin{proof}
We first show \eqref{TrUinBurnG}. Let $\Delta: T \to T \times X$ denote the diagonal. Then $\eta$ and $\epsilon$ are the morphisms $\ast \leftarrow T \stackrel{\Delta}{\to} T \times T $ and $  T \times T\stackrel{\Delta}{\leftarrow} T \to \ast$ respectively. The composition of $[T \leftarrow U \rightarrow T] \times 1_T $ and $\tau$ is $T \times T \stackrel{i \times 1_T}{\leftarrow} U \times T \stackrel{1_T \times j}\rightarrow T \times T$. The pullback diagram $$\xymatrix{  U\ar[d]^i\ar[r]_{1_U \times i} &U \times T \ar[d]^{i \times 1_T} \\T \ar[r]_{\Delta} & T \times T} $$ shows that the composition $\eta \circ ([T \leftarrow U \rightarrow T] \times 1_T) \circ \tau$ is $\ast \leftarrow U \stackrel{i \times j}{\to} T \times T$. The composition of this last map with $\eta$ shows $\Tr[T \leftarrow U \rightarrow T] = \coprod_{t \in T} U_{t,t}$. \eqref{chiT=T} follows from \eqref{TrUinBurnG}.
\end{proof}

Let $\calH_{G}(\Spt)$ denote the homotopy category of genuine $G$-spectra \cite[XII 5]{Alaska}, $\Sphere$ denote the sphere spectrum, and $\calH_{G}(\Spt)^d$ denote the full subcategory of fully dualizable objects.

It is a result of Segal \cite{SegalICM} and tom Dieck \cite{tomDieck_Trans} that there is a ring isomorphism \begin{equation}\label{AGcongEnd}\A(G) \cong \End_{\calH_{G}(\Spt)}(\Sphere),\end{equation} so for $X$ in $\calH_{G}(\Spt)^d$ we consider $\chi(X)$ to be an element of the Burnside ring $\A(G)$. The ring homomorphism \eqref{AGcongEnd} is induced from the fully faithful, symmetric, monoidal functor $\Sigma^{\infty}_+: \Burn(G) \to \calH_{G}(\Spt)$ taking a finite $G$-set $T$ to the suspension spectrum of $T_+$ \cite[Corollary 3.2 XIX]{Alaska}. 

Let $R$ be a ring. Let $\Rep(G,R)$ denote the ring with generators given by isomorphism classes of finitely generated $R$-modules with $G$-action subject to the relation that $B = A \oplus C$ when there is a short exact sequence $$A \to B \to C.$$ Addition and multiplication in $\Rep(G,R)$ are induced from $\oplus$ and $\otimes$ respectively. For $X$ in $\calH_{G}(\Spt)$, the cohomology groups $H^i(X,R)$ determine elements of $\Rep(G,R)$. 

\begin{df}\label{assoc_def_rmk}
Let $\assoc: \A(G) \to \Rep(G,R)$ be the map determined by taking a finite $G$-set $T$ to the permutation representation on $\oplus_T R$.
\end{df}

\begin{pr}\label{assocchiX=repchi}

\begin{enumerate}
\item \label{pr:assocchiX=repchi:case1} Let $R$ be any ring. For all finite $G$-CW complexes $X$ we have that $$\assoc \chi (X) = \sum_{i=-\infty}^{\infty} (-1)^i H^i(X,R).$$ 
\item \label{pr:assocchiX=repchi:case2} Let $R$ be a field of characteristic not dividing the order of $G$. Then for all $X$ in $\calH_{G}(\Spt)^d$ $$\assoc \chi (X) = \sum_{i=-\infty}^{\infty} (-1)^i H^i(X,R),$$ and in particular the right hand side is a well-defined element of $\Rep(G,R)$ in the sense that only finitely many of the summands are non-zero and the non-zero summands are finitely generated $R$-modules.
\end{enumerate}
\end{pr}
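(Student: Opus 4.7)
\emph{Proof plan.}
For part~\eqref{pr:assocchiX=repchi:case1}, I would compute both sides cellularly. Filtering $X$ by its skeleta gives cofiber sequences $X^{(i-1)} \to X^{(i)} \to T_i \wedge S^i$, where $T_i$ is the finite $G$-set indexing the $i$-cells. By Lemma~\ref{lm:TrTUT}\eqref{chiT=T} and compatibility of the Euler characteristic with suspension, $\chi(T_i \wedge S^i) = (-1)^i [T_i]$ in $\A(G)$, and additivity of $\chi$ under cofiber sequences of dualizable objects yields $\chi(X) = \sum_i (-1)^i [T_i]$. Applying $\assoc$ gives $\assoc\chi(X) = \sum_i (-1)^i [R[T_i]]$ in $\Rep(G,R)$. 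Since the cellular cochain complex has $C^i(X, R) = R[T_i]$ and computes $H^*(X, R)$, the remaining identification reduces to the standard fact that for any bounded complex of finitely generated $R[G]$-modules the alternating sum of its terms equals the alternating sum of its cohomology in $\Rep(G,R)$; this follows by induction from the defining relation $B = A \oplus C$ applied to the short exact sequences of cycles, boundaries, and cohomology.

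For part~\eqref{pr:assocchiX=repchi:case2}, my plan is to introduce
\[ F(X) := \assoc\chi(X) - \sum_i (-1)^i [H^i(X, R)] \in \Rep(G,R) \]
and prove $F \equiv 0$ on $\calH_G(\Spt)^d$ by a thick subcategory argument. The relevant facts to establish are: (a) $F$ is additive under cofiber sequences of dualizable objects and respects suspension; (b) the shifted orbits $\Sigma^n G/H_+$ generate $\calH_G(\Spt)^d$ as a thick subcategory, so by (a) it suffices to check $F=0$ on these generators; and (c) $F(G/H_+) = 0$, which follows from $\chi(G/H_+) = [G/H]$ (Lemma~\ref{lm:TrTUT}\eqref{chiT=T}) together with $H^0(G/H_+; R) = R[G/H]$ and vanishing in other degrees. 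Additivity of $\assoc\chi$ under cofiber sequences in (a) is formal, coming from additivity of the symmetric monoidal trace on dualizables.

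The main obstacle is the additivity of the cohomology term under cofiber sequences. A cofiber sequence $X \to Y \to Z$ in $\calH_G(\Spt)$ induces the usual long exact sequence $\cdots \to H^i(Z, R) \to H^i(Y, R) \to H^i(X, R) \to H^{i+1}(Z, R) \to \cdots$ of $R[G]$-modules. Here the hypothesis $\mathrm{char}(R) \nmid |G|$ enters essentially: by Maschke's theorem $R[G]$ is semisimple, so every short exact sequence of finitely generated $R[G]$-modules splits as a direct sum, and the defining relation of $\Rep(G,R)$ applied to the short exact sequences extracted from the long exact sequence produces
\[ \sum_i (-1)^i [H^i(Y, R)] = \sum_i (-1)^i [H^i(X, R)] + \sum_i (-1)^i [H^i(Z, R)]. \]
This establishes claim (a), at which point the thick subcategory argument concludes by reduction to the orbits via (b) and (c). The same semisimplicity argument ensures that for every fully dualizable $X$ the right-hand side of the formula is a well-defined element of $\Rep(G,R)$, since dualizability, together with reduction to orbits, forces $H^i(X,R)$ to be finitely generated and to vanish outside a finite range.
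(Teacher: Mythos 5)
Your treatment of part \eqref{pr:assocchiX=repchi:case1} matches the paper's: induct on cells, use additivity of $\chi$ and of the cohomology class under cofiber sequences, and conclude for $G$-sets via Lemma~\ref{lm:TrTUT}\eqref{chiT=T}.

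For part \eqref{pr:assocchiX=repchi:case2}, however, there is a genuine gap in the thick-subcategory reduction. You establish in (a) that $F$ is additive under cofiber sequences and compatible with suspension, and you deduce from (b) and (c) that $F$ vanishes. But additivity under cofiber sequences and shifts only kills $F$ on the \emph{triangulated} subcategory generated by the orbits $\Sigma^n G/H_+$; the \emph{thick} closure also requires closing up under retracts, and nothing in (a) forces $F$ to vanish on a retract. Indeed, if $Y$ is a retract of $X$ with idempotent splitting $X \simeq Y \vee Z$, then additivity only gives $F(Y) + F(Z) = F(X) = 0$, i.e.\ $F(Y) = -F(Z)$, which is far from $F(Y)=0$. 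This is not a cosmetic issue: by the Fausk--Lewis--May structure theorem cited in the paper, an arbitrary dualizable $X$ has the form $\Sigma^{-V}\Sigma^\infty A$ with $A$ only \emph{finitely dominated} (a retract of a finite $G$-CW complex in the homotopy category), so the retract step is exactly where the problem lives. The paper's proof is organized around this: it writes $\chi(A) = \Tr(\iota)$ for the idempotent $\iota\colon Y \to A \to Y$ on a finite complex $Y$, reduces by cellular induction and desuspension to an idempotent $f$ in the Burnside category $\Burn(G)$, and then the main technical content is Lemma~\ref{fBurnside_idempotentTr(f)} (with the supporting Lemmas~\ref{lm:int_Tr-to-perm}, \ref{assoc_det_set_G=Z/n}, \ref{TrAB=TrBA}), which shows $\assoc\Tr(f) = \Image H^0(f)$. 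That identity — that the trace of a Burnside idempotent matches the image of $H^0$ after applying $\assoc$ — is precisely what makes $F$ vanish on retracts, and it is not formal; your proposal never confronts it.

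A secondary remark: your final sentence claims dualizability ``forces'' bounded, finitely generated cohomology by ``reduction to orbits,'' but again that reduction is only valid once retracts are handled, so as written it begs the question. The paper instead obtains boundedness and finite generation from the explicit presentation $X \simeq \Sigma^{-V}\Sigma^\infty A$ together with a K\"unneth argument.
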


The proof of Proposition \ref{assocchiX=repchi} \eqref{pr:assocchiX=repchi:case1} is a straightforward induction on the cells of $X$. To prove \eqref{pr:assocchiX=repchi:case2}, we will have use of the following lemmas.

\begin{lm}\label{lm:int_Tr-to-perm}
Let $G$ be a finite cyclic group. Let $R$ be a field of characteristic not dividing the order of $G$. Let $T$ be a finite $G$-set, and $V \subseteq \assoc T$ be a $G$-fixed subspace such that the corresponding representation $\rho: G \to \GL V$ satisfies the property that $\Tr(\rho(g))$ is the image of an integer under the canonical map $\mathbb{Z} \to R$ for all $g$ in $G$. Then there are $G$-sets $E$ and $I$ such that $V \oplus \assoc E \cong \assoc I$. 
\end{lm}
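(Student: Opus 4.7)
The plan is to use the semisimplicity of $R[G]$ (Maschke's theorem, since $\mathrm{char}(R) \nmid |G|$) together with M\"obius inversion over the divisor lattice of $n := |G|$. The key idea is that the integer-character hypothesis forces $V$ to lie in the subring of the Grothendieck group of $R[G]$-modules generated by the permutation representations $P_e := \assoc(G/K_e)$, where $K_e \leq G$ denotes the (unique) subgroup of index $e$; once this is established, writing the coefficients as differences of non-negative integers produces $E$ and $I$.

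First, I would pass to an algebraic closure $\bar R$ and decompose $V \otimes_R \bar R \cong \bigoplus_\chi m_\chi\, \chi$ into one-dimensional characters $\chi \colon G \to \bar R^\times$. The hypothesis that each $\Tr(\rho(g))$ lies in the image of $\mathbb{Z} \to R$ means that $\chi_V$ takes values in the prime subfield $R' \subseteq R$, hence is invariant under $\Gal(\bar R / R')$. Since the characters of $\bar R[G]$ form a basis of class functions and the Galois action permutes them, the multiplicities $m_\chi$ must be constant on Galois orbits. For cyclic $G$ in characteristic zero, these orbits are exactly the sets of characters of each fixed order $d \mid n$, so faithfully flat descent yields an isomorphism $V \cong \bigoplus_{d \mid n} m_d\, \rho_d^R$ with non-negative integer multiplicities $m_d$, where $\rho_d^R := \mathbb{Q}(\zeta_d) \otimes_{\mathbb{Q}} R$ is the base change of the $\mathbb{Q}$-rational irreducible of order $d$.

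Second, I would use the identification $P_e = R[G/K_e] \cong R[x]/(x^e - 1) \cong \prod_{d \mid e} R[x]/\Phi_d(x)$, which gives $P_e \cong \bigoplus_{d \mid e} \rho_d^R$. M\"obius inversion over the divisor lattice then yields $\rho_d^R = \sum_{e \mid d} \mu(d/e)\, P_e$ in the Grothendieck group. Combining this with the decomposition from the first step, $V = \sum_e c_e P_e$ for integers $c_e := \sum_{e \mid d \mid n} m_d\, \mu(d/e)$. Writing $c_e = c_e^+ - c_e^-$ with $c_e^\pm \geq 0$ and using semisimplicity to upgrade the Grothendieck-group identity to an actual isomorphism of $R[G]$-modules produces $V \oplus \assoc E \cong \assoc I$ with $E := \coprod_e (G/K_e)^{c_e^-}$ and $I := \coprod_e (G/K_e)^{c_e^+}$.

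The main subtlety is in the first step, where one must verify that the Galois-orbit constraint genuinely places $V$ in the ``rational'' subring. In characteristic zero this is standard because $\Gal(\mathbb{Q}(\zeta_n)/\mathbb{Q})$ acts transitively on primitive $d$-th roots of unity. In positive characteristic $p \nmid n$, however, the Frobenius orbits on characters can be strictly finer than the sets of primitive $d$-th roots, so integer-valuedness alone is a weaker constraint; here I expect one must exploit the specific embedding $V \hookrightarrow \assoc T$, whose semisimple decomposition already lies in the rational subring spanned by the $P_e$, to force the finer Frobenius-orbit summands of $V$ to aggregate into full $\rho_d^R$ blocks, or else lift to characteristic zero via a Brauer-character/Witt-vector argument.
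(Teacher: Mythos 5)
Your Möbius-inversion argument is correct in characteristic zero and gives a cleaner, more structural proof than the paper's. The paper instead runs a nested induction on $|G|$ and on the largest order $d$ of an eigenvalue of $V$: it explicitly pads $V$ by auxiliary $G$-sets so that for each primitive $d$th root of unity one can peel off an $a$-dimensional eigenspace, assembles these into a permutation block $\assoc(\coprod^a G/K_d)$, and repeats on the complement. Your version decomposes $V$ into rational-type blocks $\rho_d$ and inverts the triangular system $\assoc(G/K_e) \cong \bigoplus_{d \mid e} \rho_d$, making the same point at the level of the Grothendieck group before invoking Maschke to promote the virtual identity to an actual isomorphism. Both routes work over $\Q$.

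Your worry about positive characteristic is not merely a subtlety you ``expect'' needs resolving --- it is a genuine obstacle, and the lemma as literally stated is \emph{false} in positive characteristic. Take $G = \Z/5$, $R$ a field of characteristic $11$ (so $11 \equiv 1 \bmod 5$ and the fifth roots of unity already lie in the prime field), $T = G$, and $V \subset \assoc T = R[G]$ the one-dimensional $\zeta$-eigenspace for a primitive fifth root $\zeta$. Since $\Z \to R$ surjects onto $\F_{11}$, every $\Tr(\rho(g^k)) = \zeta^k$ is the image of an integer, so the hypothesis is satisfied; yet $V \oplus \assoc E \cong \assoc I$ is impossible, because $V_\zeta$ and $V_{\zeta^2}$ occur in $V$ with unequal multiplicities while every permutation module of a cyclic group contains them equally. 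The paper's own proof has exactly the same gap: the assertion that integrality of $\Tr(\rho(g))$ forces the eigenspaces of $V$ at the primitive $d$th roots to have equal dimension requires the relevant Galois group to act transitively on primitive $d$th roots, which is true for $\Gal(\Q(\zeta_d)/\Q)$ but false for Frobenius over $\F_p$ whenever the multiplicative order of $p$ modulo $d$ is less than $\phi(d)$. The repair is exactly the Brauer-character lift you suggest: strengthen the hypothesis to demand a single $\Z$-valued function on $G$ that lifts all the traces \emph{and} is the character of a virtual $\Q$-representation (equivalently, the Brauer character of $V$ is rational), at which point your orbit argument and the paper's induction both go through verbatim. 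That stronger input is available in the sole application, Lemma~\ref{fBurnside_idempotentTr(f)}: there $\Image H^0(f)$ is the reduction of the $\Z[G]$-lattice given by the image of the idempotent integer matrix $H^0(f)$ acting on $\Z[T]$, and Remark~\ref{calculate_trImageH0(f)andTrf} exhibits its trace as an honest difference of cardinalities. So with the hypothesis of the lemma stated at the correct strength, nothing downstream in the paper is affected.
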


\begin{proof}
If $G$ has order $1$, then the lemma is true. Let $G$ have order $n>1$ and assume inductively that the lemma holds for cyclic groups of smaller orders. Because matrices which are similar over the algebraic closure of $R$ are similar over $R$, we may assume $R$ is algebraically closed. Thus $\assoc T$ and $V$ decompose into simultaneous $1$-dimensional eigenspaces whose eigenvalues are $n$th roots of unity. Let $d$ denote the largest order of an eigenvalue of $V$, and note that $d$ divides $n$. If $d=1$, then $V\cong \assoc I$ where $I$ is a set with trivial $G$-action of cardinality equal to the dimension of $V$, so the lemma is true. 

We now induct on $d$. Let $g$ denote a generator of $G$. The condition that  $\Tr(\rho(g))$ is an integer implies that the eigenspaces of $g$ in $V$ associated to the primitive $d$th roots of $1$ are all the same dimension. Call this dimension $a$.  Let $d=\prod_{j=1}^{l} p_j^{e_j}$ be the prime factorization of $d$. Let $E_j$ denote the $G$-set consisting of a single orbit of cardinality $d/p_j$, and let $E' = \coprod_{i=1}^a \coprod_{j=1}^l E_j$. Then for each $d$th root of unity, we may choose an $a$ dimensional eigenspace of $V \oplus \assoc E'$. Let $V'$ denote the direct sum of these chosen eigenspaces and let $V'' \subset V \oplus \assoc E'$ denote a complementary $G$-representation to $V'$. By construction $V' \cong \assoc (\coprod_{j=1}^a I')$, where $I'$ is the $G$-set with a single orbit of cardinality $d$.  In particular, this representation satisfies the property that the trace of every element of $G$ is an integer. It follows that the same property holds for $V''$. By induction on $d$, there are $G$-sets $E''$ and $I''$ such that $V'' \oplus \assoc E'' \cong \assoc I''$. Thus $V \oplus \assoc E' \oplus \assoc E'' \cong  \assoc(\coprod_{j=1}^a I') \oplus \assoc I''$. We may therefore let $E = E' \coprod E''$ and $I =  (\coprod_{j=1}^a I') \coprod I''$, and the lemma is true.

\end{proof}

\begin{lm}\label{assoc_det_set_G=Z/n}
Let $G$ be a finite cyclic group. Assume $R$ has a prime ideal of residue characteristic not dividing the order of $G$. If $T$ and $T'$ are two finite $G$-sets such that $\assoc T \cong \assoc T'$, then $T \cong T'$.
\end{lm}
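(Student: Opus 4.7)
The plan is to recover the orbit data of $T$ from the isotypic decomposition of its permutation representation after base change to a residue field where $k[G]$ is semisimple. Since $G$ is cyclic of order $n$, every finite $G$-set decomposes uniquely as $T = \coprod_{d \mid n} a_d \cdot (G/H_d)$ where $H_d \subset G$ is the unique subgroup of index $d$, and the nonnegative integer tuple $(a_d)_{d\mid n}$ is a complete $G$-isomorphism invariant. Writing $T' = \coprod_{d \mid n} a'_d \cdot (G/H_d)$ similarly, the lemma reduces to recovering each $a_d$ from the $R[G]$-module $\assoc T$.

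First I would base change to a convenient field. By hypothesis there is a prime ideal $\mathfrak{p} \subset R$ whose residue field has characteristic $\ell \nmid n$; let $k$ be an algebraic closure of $R/\mathfrak{p}$. Applying $- \otimes_R k$ to the assumed isomorphism $\assoc T \cong \assoc T'$ gives an isomorphism of permutation $k[G]$-modules $\bigoplus_T k \cong \bigoplus_{T'} k$. Since $\ell \nmid |G|$, Maschke's theorem makes $k[G]$ semisimple, and since $\ell \nmid n$ ensures $k$ contains all $n$th roots of unity, the simple $k[G]$-modules are exactly the one-dimensional characters $\chi \in \hat G$. Each side thus decomposes canonically into isotypic components with multiplicities $m_\chi, m'_\chi \in \Z_{\geq 0}$ that are intrinsic invariants of the isomorphism class, so the isomorphism forces $m_\chi = m'_\chi$ for every $\chi$.

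Next I would relate $m_\chi$ to the tuple $(a_d)$ by Frobenius reciprocity. Since $\assoc(G/H_d) = \operatorname{Ind}_{H_d}^G \mathbf{1}$, the character $\chi$ occurs in $\assoc(G/H_d)$ with multiplicity $1$ when $\chi|_{H_d}$ is trivial and with multiplicity $0$ otherwise. For cyclic $G$, $\chi|_{H_d}$ is trivial precisely when $H_d \subseteq \ker \chi$, which happens iff the order $e$ of $\chi$ divides $d$. Hence $m_\chi = \sum_{d \colon e \mid d \mid n} a_d$ and analogously $m'_\chi = \sum_{d \colon e \mid d \mid n} a'_d$. Möbius inversion on the divisor poset of $n$ then uniquely recovers each $a_d$ from the function $e \mapsto m_\chi$, yielding $a_d = a'_d$ and so $T \cong T'$.

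There is no serious obstacle, but the one point that deserves careful attention is that the multiplicities $m_\chi$ extracted after base change are honest nonnegative integers, namely $\dim_k$ of isotypic summands, rather than mere elements of $k$ that might agree only modulo $\ell$. This integrality is what lets the final Möbius inversion run as an identity of integers, uniformly in whether $\ell = 0$ or $\ell > 0$, and so the hypothesis on $R$ is used only to obtain the field $k$ in which $|G|$ is invertible and the $n$th roots of unity split.
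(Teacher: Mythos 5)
Your proof is correct, and it takes a recognizably different route from the paper's. Both arguments begin the same way: base change along $R \to k$ with $k$ algebraically closed of characteristic prime to $|G|$, after which the permutation module splits into one-dimensional characters. From there the paper runs a downward induction on the maximal order $d$ of an eigenvalue appearing in $\assoc T$: for a primitive $d$th root of unity $\zeta$, the $\zeta$-eigenspace dimension equals the number of orbits of size $d$ (since $d$ is maximal, no larger orbit contributes $\zeta$ and no smaller orbit can), so $T$ and $T'$ have the same number of $d$-cycles, and one removes them and inducts. You instead compute the full isotypic multiplicity vector $(m_\chi)$ of $\assoc T$, express it via Frobenius reciprocity as a linear transform of the orbit counts $(a_d)$ over the divisor poset of $n$, and invert that transform in one step by M\"obius inversion. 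The two proofs are the same computation packaged differently: the paper's induction is exactly a greedy, one-divisor-at-a-time version of your inversion. Your version makes the structure more transparent (it exhibits the orbit counts as an explicit integral linear function of the character multiplicities), while the paper's is more self-contained in that it avoids invoking Frobenius reciprocity and M\"obius inversion by name. Your closing remark about integrality of the $m_\chi$ is the right thing to flag and is indeed the point that makes the inversion legitimate uniformly in the characteristic of $k$.
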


\begin{rmk}
Lemma \ref{assoc_det_set_G=Z/n} becomes false when $G$ is $\Z/2\times \Z/2$ \cite[\S 7 Example 2]{Roberts-Equivariant_Milnor}.
\end{rmk}

\begin{proof}
By assumption, there is a ring map $R \to K$ where $K$ is an algebraically closed field of characteristic not dividing the order of $G$. We may replace $R$ by $K$, allowing us to decompose $\assoc T \cong \assoc T'$ into simultaneous $1$-dimensional eigenspaces whose eigenvalues are $n$th roots of unity. Let $d$ denote the largest order of such an eigenvalue. If $d=1$, then both $T$ and $T'$ have trivial $G$-actions and $\vert T \vert = \vert T' \vert = \dim \assoc T'$, so the lemma is true. Assume by induction that the lemma holds for all smaller values of $d$. Let $g$ be a generator of $G$ and $\zeta$ be a primitive $d$th root of unity. The number of $d$-cycles in $\assoc T$ is the dimension of the $\zeta$-eigenspace of $T$. Since the same holds for $T'$, it follows that $T$ and $T'$ contain the same number of $d$-cycles. Furthermore, removing the $d$-cycles from both $T$ and $T'$ results in finite $G$-sets with isomorphic associated permutation representations for which the lemma holds inductively. Thus $T \cong T'$.
\end{proof}

\begin{lm}\label{fBurnside_idempotentTr(f)}
Let $G$ be a finite group and let $f:T \to T$ be an idempotent in the Burnside category of $G$. Let $R$ be a field of characteristic not dividing the order of $G$. Then $\assoc \Tr(f) = \Image H^0(f)$ in $\Rep(G,R)$. 
\end{lm}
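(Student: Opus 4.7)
The plan is to split the idempotent $f$ inside the ambient stable homotopy category $\calH_G(\Spt)$ and then reduce the claim to the already-proved Proposition~\ref{assocchiX=repchi}(\ref{pr:assocchiX=repchi:case2}). Under the symmetric monoidal functor $\Sigma^{\infty}_+ \colon \Burn(G) \to \calH_G(\Spt)$ recalled just before Lemma~\ref{lm:TrTUT}, $f$ determines an idempotent endomorphism of $\Sigma^{\infty}_+ T$. Since $\calH_G(\Spt)$ is a stable (in particular idempotent-complete) homotopy category, this idempotent splits: $\Sigma^{\infty}_+ T \simeq X \oplus Y$, with $f$ acting as $1_X \oplus 0_Y$ relative to this decomposition. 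As a summand of the fully dualizable object $\Sigma^{\infty}_+ T$, the object $X$ is itself fully dualizable.

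By additivity of the categorical trace on direct sums in a symmetric monoidal stable category, one then obtains
$$\Tr(f) = \Tr(1_X) + \Tr(0_Y) = \chi(X),$$
where the equality lives in $\A(G) \cong \End_{\calH_G(\Spt)}(\Sphere)$ via the Segal--tom Dieck isomorphism \eqref{AGcongEnd}. Applying Proposition~\ref{assocchiX=repchi}(\ref{pr:assocchiX=repchi:case2}) to the fully dualizable $X$ then yields
$$\assoc \chi(X) = \sum_i (-1)^i H^i(X, R)$$
in $\Rep(G, R)$.

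To conclude I would identify the right-hand side with $\Image H^0(f)$. Because $T$ is a finite discrete $G$-set, $H^i(\Sigma^{\infty}_+ T, R)$ is concentrated in degree zero, where it coincides with the permutation representation $\assoc T$. Additivity of $H^{*}(-,R)$ on direct sums then forces $H^i(X, R) = 0$ for $i \ne 0$, and under the splitting $H^0(\Sigma^{\infty}_+ T, R) = H^0(X, R) \oplus H^0(Y, R)$ the endomorphism $H^0(f)$ is exactly the projection onto the first summand, so $\Image H^0(f) = H^0(X, R)$. Assembling the identities gives $\assoc \Tr(f) = \assoc \chi(X) = H^0(X, R) = \Image H^0(f)$. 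The only real hazard is bookkeeping: checking that the idempotent splitting in $\calH_G(\Spt)$, the additivity of the categorical trace, and the Segal--tom Dieck identification mesh correctly, after which the lemma reduces mechanically to the equivariant Euler characteristic formula of Proposition~\ref{assocchiX=repchi}(\ref{pr:assocchiX=repchi:case2}).
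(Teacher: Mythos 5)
Your argument is circular. Proposition~\ref{assocchiX=repchi}\eqref{pr:assocchiX=repchi:case2} is not logically prior to this lemma: the paper proves that proposition by reducing a dualizable $G$-spectrum to $\Sigma^{-V}\Sigma^{\infty}A$ with $A$ a retract of a finite $G$-CW complex, then inducting on cells and suspending down until one is faced with an idempotent on $\Sigma^{\infty}_+T$ for a finite $G$-set $T$ --- and at that point the paper's proof explicitly invokes Lemma~\ref{fBurnside_idempotentTr(f)}. So your application of the proposition to the summand $X$ of $\Sigma^{\infty}_+T$ is precisely the base case the proposition itself depends on. The surrounding bookkeeping in your proposal --- splitting the idempotent in $\calH_G(\Spt)$, additivity of the categorical trace, the identification of $H^i(X,R)$ as concentrated in degree zero with $H^0(X,R)\cong\Image H^0(f)$ --- is all correct, but it is the same reduction the paper performs in the other direction when deducing the proposition from the lemma; nothing new is proved.

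What is actually needed, and what the paper supplies, is a self-contained algebraic argument that never passes through $\calH_G(\Spt)$: reduce via trace functions and Maschke to cyclic $G$; use Lemma~\ref{lm:int_Tr-to-perm} to show $\Image H^0(f)$ and $\Ker H^0(f)$ are stably permutation, i.e.\ become $\assoc I$ and $\assoc I'$ after adding permutation summands; conjugate $f$ by the change-of-basis morphisms $\Burn(\eta)$, $\Burn(\eta^{-1})$ in the $R$-linear Burnside category $\Burn(G,R)$; and apply cyclicity of trace (Lemma~\ref{TrAB=TrBA}) together with Lemma~\ref{assoc_det_set_G=Z/n} (to treat the source and target as a single object) to conclude $\Tr(f)=I$ with $\assoc I\cong\Image H^0(f)$. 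To make your route non-circular you would need an independent proof of Proposition~\ref{assocchiX=repchi}\eqref{pr:assocchiX=repchi:case2}, but any such proof must at some stage handle idempotents on $\Sigma^{\infty}_+T$, which is exactly the content of the lemma you are trying to prove.
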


We establish a few preliminaries before proving Lemma \ref{fBurnside_idempotentTr(f)}.

\begin{rmk}\label{calculate_trImageH0(f)andTrf}
We may directly calculate the trace functions $\chi_{\assoc \Tr(f)}$ and $\chi_{\Image H^0(f)}$ of $\assoc \Tr(f)$ and $\Image H^0(f)$. Since $f$ is idempotent, $$\chi_{\Image H^0(f)}(g) = \Tr(H^0(f) \vert \Image H^0(f) )= \Tr(H^0(g)H^0(f) \vert H^0(T)).$$ Let $f = [T \stackrel{i}{\leftarrow} U \stackrel{j}{\rightarrow} T] - [T \stackrel{i}{\leftarrow} V \stackrel{j}{\rightarrow} T]$. Let $\delta_t$ be the function $\delta_t: T \to R$ such that $\delta_t(t) = 1$ and $\delta_t(t') = 0$ for $t' \neq t$. In the below, $i_*$ denotes the push forward on $H^0$ associated to $i$, and $g^*, j^*$ are the corresponding pullbacks. We compute \begin{align*}
\chi_{\Image H^0(f)}(g) = \Tr(H^0(g)H^0(f) \vert H^0(T)) = \sum_{t \in T} (g^* i_* j^* \delta_t)(t) =  \sum_{t \in T} (i_* g^*  j^* \delta_t)(t) \\
 =  \sum_{t \in T} (\sum_{u \in U : i(u) = t} (g^* j^* \delta_t)(u) - \sum_{v \in V : i(v) = t} (g^* j^* \delta_t)(v)) \\
 =  \sum_{t \in T} (\sum_{u \in U : i(u) = t} \delta_t(gju) - \sum_{v \in V : i(v) = t}  \delta_t(gjv)) \\
 = \vert \{u \in U : i(u) = gj(u) \} \vert - \vert \{v \in V : i(v) = gj(v) \} \vert .
\end{align*}

It follows from Lemma \ref{lm:TrTUT} that \begin{align*} \chi_{\assoc \Tr(f)}(g) = \vert \{u \in U : gu=u, i(u) = j(u) \} \vert - \vert \{v \in V : gv =v, i(v) = j(v) \} \vert.\end{align*}

It is not clear from these calculations that $\chi_{\Image H^0(f)} = \chi_{\assoc \Tr(f)}$. We will show in Lemma \ref{fBurnside_idempotentTr(f)} that they are.

It is clear that $\chi_{\Image H^0(f)}$ takes values in the integers, and it follows from $$\chi_{\Image H^0(f)} + \chi_{\Ker H^0(f)} = \chi_{H^0(T)},$$ that $\chi_{\Ker H^0(f)}$ also takes values in the integers. 
\end{rmk}

To prove Lemma \ref{fBurnside_idempotentTr(f)}, we will make use of the following variation of the Burnside category.

\begin{df}
Define the category $\Burn(G,R)$ to have objects $G$-sets, and for $G$-sets $S$ and $T$, the morphisms $\Burn(G,R)(S, T)$ are $R$-linear combinations of equivalence classes of diagrams of $G$-sets $S \leftarrow U \rightarrow T$. Composition is defined to be $R$-bilinear and induced from \eqref{BurnGcomp} as in the definition of the Burnside category. 
\end{df}

$\Burn(G,R)(S, T)$ is a symmetric, monoidal category with the monoidal structure defined by the product of $G$-sets, and $\Tr[T \leftarrow U \rightarrow T] = \coprod_{t \in T} U_{t,t}$ by the proof of Lemma \ref{lm:TrTUT}. 

\begin{lm}\label{TrAB=TrBA}
Let $A$ and $B$ be morphisms in $\Burn(G,R)(T, T)$. Then $\Tr(A \circ B)= \Tr (B\circ A)$.
\end{lm}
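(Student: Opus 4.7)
The plan is to give a direct combinatorial proof using the explicit pullback description of composition in $\Burn(G,R)$ together with the trace formula from Lemma \ref{lm:TrTUT}. The statement is the usual cyclicity of trace in a symmetric monoidal category with every object self-dual, so the main task is to turn that abstract statement into a manifest $G$-equivariant bijection of sets.

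First, by $R$-bilinearity of composition and of $\Tr$, it suffices to prove the identity when $A = [T \stackrel{i}{\leftarrow} U \stackrel{j}{\rightarrow} T]$ and $B = [T \stackrel{k}{\leftarrow} V \stackrel{\ell}{\rightarrow} T]$ are single spans with coefficient $1$. By the composition rule \eqref{BurnGcomp}, $A \circ B$ is represented by a span $T \leftarrow W \rightarrow T$, where $W$ is the fiber product of $U \stackrel{i}{\to} T \stackrel{\ell}{\leftarrow} V$, that is, $W = \{(u,v) \in U \times V : i(u) = \ell(v)\}$, with the left leg to $T$ given by $(u,v) \mapsto k(v)$ and the right leg by $(u,v) \mapsto j(u)$. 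Applying Lemma \ref{lm:TrTUT}\eqref{TrUinBurnG} gives
$$\Tr(A \circ B) = \{(u,v) \in U \times V : i(u) = \ell(v) \text{ and } j(u) = k(v)\}$$
as a finite $G$-set (with the diagonal $G$-action).

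Performing the analogous computation for $B \circ A$ yields
$$\Tr(B \circ A) = \{(v,u) \in V \times U : k(v) = j(u) \text{ and } \ell(v) = i(u)\}.$$
The swap map $(u,v) \mapsto (v,u)$ visibly gives a $G$-equivariant bijection between these two sets, since the $G$-action is diagonal and the two conjunctions of conditions are identical. This gives the equality of spans in $\Burn(G,R)(\ast,\ast)$, and hence the cyclicity $\Tr(A \circ B) = \Tr(B \circ A)$.

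The only obstacle is bookkeeping: one must keep straight which leg of each span is the pullback factor, so that the conditions defining $\Tr(A \circ B)$ and $\Tr(B \circ A)$ end up matching under the swap. Once the two pullback squares are written out, the argument is essentially the observation that the fiber product is symmetric in its two legs. No deeper input—such as dualizability data beyond what was recorded in the proof of Lemma \ref{lm:TrTUT}—is required.
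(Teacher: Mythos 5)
Your proof is correct and follows essentially the same route as the paper's: reduce by $R$-bilinearity to single spans, compute $\Tr(A\circ B)$ as the $G$-set of pairs satisfying both coincidence conditions, and observe the symmetry. The paper writes this as a chain of coproduct-over-$T\times T$ isomorphisms using the $A_{t,t'}$ fiber notation, while you unwind the pullback explicitly and exhibit the swap map, but the combinatorial content is identical.
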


\begin{proof} Since composition of morphisms is $R$-bilinear, and $\Tr$ is $R$-linear, it suffices to prove the claim where $A$ and $B$ are determined by diagrams of $G$-sets $T \leftarrow A \rightarrow T$ and $T \leftarrow B \rightarrow T$, respectively. Then:
\begin{align*} \Tr(A \circ B)& \cong \coprod_{t \in T} (A \circ B)_{tt} \\
&\cong  \coprod_{t \in T} \coprod_{t' \in T} A_{t, t'} \times B_{t',t} \\
& \cong  \coprod_{t , t' \in T} A_{t, t'} \times B_{t',t} \\
& \cong \coprod_{t , t' \in T} B_{t',t} \times A_{t, t'} \\
&\cong  \coprod_{t' \in T} \coprod_{t \in T} B_{t',t} \times A_{t, t'} \\ 
&\cong \coprod_{t' \in T} (A \circ B)_{t',t'}  \cong  \Tr(B \circ A) \end{align*}
\end{proof}

\begin{proof} (of Lemma \ref{fBurnside_idempotentTr(f)})
By Maschke's theorem \cite[Theorem 3.1, Theorem 3.5]{Etingofetal_Intro_Rep_thy}, it suffices to show that the trace functions $\chi_{\assoc \Tr(f)}$ and $\chi_{\Image H^0(f)}$ of $\assoc \Tr(f)$ and $\Image H^0(f)$ are the same. To do this, we may show for each $g$ in $G$ that $\chi_{\assoc \Tr(f)}(g) = \chi_{\Image H^0(f)}(g)$, thus reducing to the case where $G$ is a finite cyclic group. 

By Remark \ref{fBurnside_idempotentTr(f)}, $\chi_{\Image H^0(f)}$ takes values in the integers, so we may apply Lemma \ref{lm:int_Tr-to-perm} to $\Image H^0(f) \subseteq \assoc T$. Thus there are $G$-sets $E$ and $I$ such that $\Image H^0(f) \oplus \assoc E \cong \assoc I$. Let $f': T \coprod E \to T \coprod E$ be the idempotent in the Burnside category of $G$ defined $f'=f \coprod 1_E$. Applying Remark \ref{fBurnside_idempotentTr(f)} and Lemma \ref{lm:int_Tr-to-perm} to $\Ker H^0(f') \subseteq \assoc (T \coprod)$, there are $G$-sets $E'$ and $I'$ such that $\Ker H^0(f') \oplus \assoc E' \cong \assoc I'$. Define $f'': T \coprod E \coprod E' \to T \coprod E  \coprod E' $ be the idempotent in the Burnside category of $G$ defined $f''=f' \coprod 0_{E'}$. 

By Lemma \ref{lm:TrTUT}, $\Tr f'' \cong E \coprod \Tr f$, and $\Image H^0(f'') \cong \Image H^0(f) \oplus \assoc E$. Therefore it suffices to prove the lemma with $f$ replaced by $f''$.

Since $f''$ is an idempotent, $$\assoc ( T \coprod E \coprod E' \to T) \cong \Ker H^0(f'') \oplus  \Image H^0(f'').$$ By construction, $\Image H^0(f'') \cong \Image H^0(f') \cong \Image H^0(f) \oplus \assoc E \cong I$, and $\Ker H^0(f'') \cong \Ker H^0(f') \oplus \assoc E' \cong \assoc I'$. Replacing $T$ by $T \coprod E \coprod E'$ and $f$ by $f''$, we may therefore assume that there exist $G$-sets $I$ and $I'$ such that $\Image H^0(f) \cong \assoc I$ and $\Ker H^0(f) \cong \assoc I'$.

Since $f$ is an idempotent, there is an isomorphism $\assoc T \cong \Image H^0(f) \oplus \Ker H^0(f)$, whence an isomorphism $\eta: \assoc T \to \assoc (I \coprod I')$. Define $T'= I \coprod I'$. For $t \in T$ and $t' \in T'$, let $\eta_{t' ,t}$ denote the corresponding entry of the matrix of $\eta$ with respect to the bases $T$ and $T'$, and similarly define $(\eta^{-1})_{t,t'}$ to be the matrix entry of the inverse of $\eta$. Note that $\eta_{t' ,t} (t',t)$ is an element of $\assoc (T' \times T)$. The subset $\coprod_{t \in T,t' \in T'} \eta_{t' ,t} (t',t)$ of $\assoc (T' \times T)$ is invariant under the action of $G$ because $\eta$ is a $G$-isomorphism. It follows that we may view $\coprod_{t \in T,t' \in T'} \eta_{t' ,t} (t',t)$ as an $R$-linear combination of $G$-subsets of $T' \times T$. The analogous assertion holds for $\eta'$. 

The $R$-linear combination of $G$-invariant subsets of $T' \times T$ associated to $\coprod_{t \in T,t' \in T'} \eta_{t' ,t} (t',t)$ determines a morphism in $\Burn(G,R)(T, T')$, which we will denote by $\Burn(\eta)$. Similarly define $\Burn(\eta^{-1})$ in $\Burn(G,R)(T', T)$. By construction, $\Burn(\eta) \circ f  \circ \Burn(\eta^{-1}) = I$ where the two maps to $T'$ are both the canonical inclusion $I \to T'$.

We claim that Lemma \ref{TrAB=TrBA} implies that \begin{align*}\Tr (\Burn(\eta) \circ f  \circ \Burn(\eta^{-1})) = \Tr (f) \end{align*} To see that we may apply Lemma \ref{TrAB=TrBA} in this case, note that by Lemma \ref{assoc_det_set_G=Z/n}, we may fix an isomorphism of $G$-sets $T \cong T'$, and consider $\Burn(\eta)$ and $\Burn(\eta^{-1})$ to both be elements of $\Burn(G,R)(T, T)$ allowing us to conclude $\Tr (\Burn(\eta) \circ f  \circ \Burn(\eta^{-1}))= \Tr(\Burn(\eta^{-1})\circ \Burn(\eta) \circ f)$. Thus $\Tr (f) = I$. Since $\assoc I \cong \Image H^0(f)$, we have shown the lemma.
\end{proof}

The following corollary of Lemma \ref{fBurnside_idempotentTr(f)} is not needed for the rest of the article, but is included as a curiosity. To put this in context, we remark that there are many interesting idempotents of the Burnside ring of a finite group $G$. The idempotents of $\A(G) \otimes \Q$ were computed by \cite{Solomon}, \cite{Yoshida}, and \cite{Gluck-idempotent}. The integral idempotents, i.e., the idempotents of $\A(G)$, were computed by Dress \cite{Dress-solvable}, see \cite[Theorem 3.3.7, Corollary 3.3.9]{Bouc-Burnside_rings}. 

\begin{co}\label{co:associdempAg}
Let $G$ be a finite group and let $R$ be a field of characteristic not dividing the order of $G$. Suppose $f$ is an idempotent in the Burnside ring $\A(G)$. Then $\assoc f$ is either $0$ or the trivial representation in $\Rep(G,R)$. 
\end{co}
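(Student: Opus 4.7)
The plan is to deduce this directly from Lemma \ref{fBurnside_idempotentTr(f)} applied to the unit object. Recall that $\A(G) = \Burn(G)(\ast, \ast)$, so an idempotent $f \in \A(G)$ is the same as an idempotent endomorphism of the unit $G$-set $\ast$ in the Burnside category.

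First I would observe that for an endomorphism $f$ of the monoidal unit, the trace is $f$ itself. Indeed, unraveling the definition of $\Tr$, the coevaluation $\eta: \Sphere \to \ast \wedge \ast$, the switch $\tau$, and the evaluation $\epsilon: \ast \wedge \ast \to \Sphere$ are all the identity on $\ast$, so the trace composition reduces to $f$. Hence $\Tr(f) = f$ in $\A(G)$.

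Next I would invoke Lemma \ref{fBurnside_idempotentTr(f)}, which applies because $f$ is an idempotent in the Burnside category and $R$ is a field of characteristic not dividing $|G|$. The lemma gives
\[
\assoc f = \assoc \Tr(f) = \Image H^0(f) \quad \text{in } \Rep(G,R).
\]
Now $H^0(\ast, R) = R$ with trivial $G$-action, so $H^0(f) \colon R \to R$ is an $R$-linear endomorphism of a one-dimensional vector space; that is, multiplication by some scalar $\alpha \in R$. Functoriality of $H^0$ together with the idempotence $f \circ f = f$ forces $\alpha^2 = \alpha$, and since $R$ is a field the only solutions are $\alpha \in \{0,1\}$.

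Finally I would conclude: if $\alpha = 0$ then $\Image H^0(f) = 0$, and if $\alpha = 1$ then $\Image H^0(f) = R$ with trivial $G$-action, i.e., the trivial representation. In either case $\assoc f$ is as claimed. There is no real obstacle here; the entire content has been packaged into Lemma \ref{fBurnside_idempotentTr(f)}, and the corollary is simply its specialization to $T = \ast$ combined with the observation that $R$ carries no nontrivial idempotents.
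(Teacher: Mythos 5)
Your proof is correct and follows essentially the same route as the paper: apply Lemma \ref{fBurnside_idempotentTr(f)} with $T=\ast$, identify $\Tr(f)=f$, and conclude by examining $\Image H^0(f)\subseteq H^0(\ast)=R$. The only cosmetic difference is that the paper concludes directly from $\Image H^0(f)$ being a subspace of the one-dimensional $R$ (so necessarily $0$ or $R$), whereas you take a small detour through the idempotence of the scalar $\alpha$; the idempotence is not actually needed at that final step, though of course it does no harm.
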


\begin{proof}
Since $\Image H^0(f)$ is a submodule of $H^0(\ast)$, we have that $\Image H^0(f)$ is either $0$ or $R$ with the trivial action. The corollary then follows from Lemma \ref{fBurnside_idempotentTr(f)}.
\end{proof}

We wish to thank Serge Bouc and Alexander Duncan for useful correspondence concerning Corollary \ref{co:associdempAg}. In particular, they explicitly computed an interesting (integral) idempotent of the Burnside ring of the alternating group $G=A_5$ such that the associated element of $\Rep(G, \Q)$ is $0$. 

\begin{proof} (of Proposition \ref{assocchiX=repchi}) We first make an observation useful for proving both \eqref{pr:assocchiX=repchi:case1} and \eqref{pr:assocchiX=repchi:case2}: For a cofiber sequence $X \to Y \to Z$ in $\calH_{G}(\Spt)^d$, there is an equality $\chi(X) + \chi(Z) = \chi(Y)$ in $\A(G)$ (see for example  \cite[XVII Theorem 1.6]{Alaska}), and therefore an equality $\assoc \chi(X) + \assoc \chi(Z) = \assoc \chi(Y)$. 

Since a short exact sequence $A \to B \to C$ of $G$-modules induces the relation $A + C = B$ in $\Rep(G,R)$, it follows by induction that an exact sequence $$ 0 \to M_{n} \to M_{n+1} \to \ldots \to M_{m-1} \to M_m \to 0$$ of $G$-modules induces the relation $\sum_{i=-\infty}^{\infty} (-1)^i M_i = 0$. The cofiber sequence  $X \to Y \to Z$ induces the long exact sequence $$ \ldots \to  H^i(Z,R) \to H^i(Y,R) \to H^i(X,R) \to H^{i+1}(Z,R) \to \ldots $$ in cohomology, whence we have that $$ \sum_{i=-\infty}^{\infty} (-1)^i H^i(X,R) +\sum_{i=-\infty}^{\infty} (-1)^i H^i(Z,R)=\sum_{i=-\infty}^{\infty} (-1)^i H^i(Y,R)$$ under the hypothesis that for two (and thus all three) of $X$, $Y$, and $Z$, only finitely many terms in each sum are non-zero. Therefore if the claim holds for any two of $X$, $Y$, and $Z$ in a cofiber sequence $X \to Y \to Z,$ it holds for the third. 

We now prove \eqref{pr:assocchiX=repchi:case1}: By induction on the number of cells of the finite $G$-CW complex $X$, it therefore suffices to show the claim for $X$ a finite $G$-set. This then follows from Lemma~\ref{lm:TrTUT}~\eqref{chiT=T}.

We now prove \eqref{pr:assocchiX=repchi:case2}: Let $A$ be a $G$-CW complex which is a retract in the homotopy category of a finite $G$-CW complex $Y$. Let $\iota$ be the endomorphism of $Y$ given by the composition of the retract and the inclusion $\iota:Y \to A \to Y$.  It is formal that $\Tr(\iota) = \chi(A)$. We likewise have that the image $\Image H^i(\iota)$ of $H^i(\iota)$ is a direct summand of $H^i(Y,R)$ isomorphic to $H^i(A,R)$. We show the claim for $A$ by proving that for a $G$-equivariant map $f:Y \to Y$ which is idempotent in the homotopy category, we have \begin{equation}\label{Trf=sumimage}\assoc \Tr(f) = \sum_{i=-\infty}^{\infty} (-1)^i [\Image H^i(f)],\end{equation} and applying \eqref{Trf=sumimage} for $f=\iota$. Both sides of are unchanged under equivariant homotopy, and it follows that we may assume that $f$ is cellular \cite[I. Theorem 3.4]{Alaska}. This allows us to induct on the dimension of the top cells of $Y$, reducing to the case where $Y = T_+ \wedge S^n$ for a $G$-set $T$. Since replacing $f$ by its suspension multiplies both sides of \eqref{Trf=sumimage} by $-1$, we replace $f$ by $f \wedge S^{-n}$, reducing to the case where $Y= T_+$ and the map $f$ is now is the stable homotopy category. Then as remarked above $f$ corresponds to an idempotent $f: T \to T$ in $\Burn(G)$ \cite[Corollary 3.2 XIX]{Alaska}. Thus $ \sum_{i=-\infty}^{\infty} (-1)^i [\Image H^i(f)] = [\Image H^0(f)] $. The equality \eqref{Trf=sumimage} then follows by Lemma \ref{fBurnside_idempotentTr(f)}. 

Let $V$ be an $n$-dimensional representation of $G$ and let $S^{-V}$ be the dual of the one-point compactification $S^V$ of $V$. If $X$ is a strongly dualizable $G$-spectra, then so is $DX$ and $\chi(X) = \chi(DX)$. Thus $\chi(S^{-V})=\chi(S^V)$. Since $S^V$ is a finite $G$-CW complex, we have that $$\assoc \chi(S^V) = \sum_{i=-\infty}^{\infty} (-1)^i H^i(S^V,R) = R + (-1)^n R,$$ where the action of $G$ on the first summand is trivial, and the action of $G$ on the second summand is via the sign on the determinant, i.e. $g \in G$ acts by $\pm 1$ depending on if $g$ preserves or reverses the orientation of $S^V$. Combining with the previous, we have $$\assoc \chi(S^{-V}) = R + (-1)^n R.$$ By definition, $H^i (DX, R) = \pi_{-i} F(DX, R) = \pi_{-i}(X \wedge R) = H_{-i} (X,R)$. Thus $$H^i(S^{-V}, R) = \begin{cases} R &\mbox{if } i= 0 \\ 
R & \mbox{if } i = -n \\ 
0&\mbox{otherwise }\end{cases},$$ where for $i=0$, the action is trivial, and for $i=-n$, $G$ acts on $R$ by the sign of the determinant. Thus the claim holds for $S^{-V}$.

By \cite[Proposition 2.1]{Fausk_Lewis_May}, $X$ in $\calH_{G}(\Spt)^d$ is equivalent to $\Sigma^{-V} \Sigma^{\infty} A$ where $A$ is a finitely dominated based $G$-CW complex and $V$ is a representation of $G$. Since $A$ is finitely dominated, by definition we have a finite $G$-CW complex $Y$ such that $A$ is a retract of $Y$ in the homotopy category of $G$-CW complexes. By the above, the claim holds for $A$ and $S^{-V}$. Since the smash product of two endomorphisms of $\Sphere$ induces the multiplication of $\A(G)$, we have $\chi(\Sigma^{-V} \Sigma^{\infty} A) = \chi(S^{-V}) \chi(A)$. Thus $\assoc \chi(\Sigma^{-V} \Sigma^{\infty} A) = \assoc \chi(S^{-V}) \assoc \chi(A)$. By the K\"unneth spectral sequence, we have the equality $$\sum_{i=-\infty}^{\infty} (-1)^i H^i(\Sigma^{-V} \Sigma^{\infty} A,R) = (\sum_{i=-\infty}^{\infty} (-1)^i H^i(S^{-V},R))(\sum_{i=-\infty}^{\infty} (-1)^i H^i(A,R))$$ in $\Rep(G,R)$. The claim for $X$ then follows from the claim for $S^{-V}$ and $A$.

\end{proof}

This concludes the facts we need about genuine $G$-spectra. We now turn to the relationship between $\bbb{A}^1$-homotopy theory over $\bbb{R}$ and genuine $\Gal_{\bbb{R}}$-spectra. 

Let $\calH_{\bbb{A}^1}(\spaces{\bbb{R}})$ denote the $\bbb{A}^1$-homotopy category of simplicial presheaves on smooth schemes over $\bbb{R}$ in the sense of Morel-Voevodsky. Let $ \calH_{\Gal_{\bbb{R}}}(\sSet)$ denote the homotopy category of genuine $G$-spaces for $G = \Gal_{\bbb{R}}$. There is a symmetric monoidal functor $$LB:\calH_{\bbb{A}^1}(\spaces{\bbb{R}}) \to \calH_{\Gal_{\bbb{R}}}(\sSet),$$ called Betti realization, which takes a smooth scheme $X$ over $\bbb{R}$ to the complex points $X(\bbb{C})$ with the $\Gal_{\bbb{R}}$-action induced from the tautological action of $\Gal_{\bbb{R}}$ on $\bbb{C}$. See \cite[3, 3.3]{morelvoevodsky1998}. 

$LB$ furthermore determines a functor after stabilization. Namely, let $\calH_{\bbb{A}^1}(\Spt^{\bbb{P}^1}(\bbb{R}))$ denote the $\bbb{A}^1$-homotopy category of $\bbb{P}^1$-spectra over $\bbb{R}$. As above, let $ \calH_{\Gal_{\bbb{R}}}(\Spt)$ denote the homotopy category of genuine $G$-spectra for $G = \Gal_{\bbb{R}}$. There is a commutative diagram of Betti-realization functors \cite[4.4]{HO_Galois} $$\xymatrix{\calH_{\bbb{A}^1}(\spaces{\bbb{R}}) \ar[rr]^{LB} \ar[d]^{(\Sigma^{\infty}_{\bbb{P}^1})_+}&& \ar[d]^{(\Sigma^{\infty}_{S^{\bbb{C}}})_+}\calH_{\Gal_{\bbb{R}}}(\sSet)\\ \calH_{\bbb{A}^1}(\Spt^{\bbb{P}^1}(\bbb{R})) \ar[rr]^{LB}&& \calH_{\Gal_{\bbb{R}}}(\Spt).}$$

For $k$ a field, Morel has shown that the endomorphisms of the sphere $ \End_{\calH_{\bbb{A}^1}(\Spt^{\bbb{P}^1}(k))}(\Sphere)$ are isomorphic to the Grothendieck-Witt group $\GW(k)$ of $k$ \cite[Corollary 1.24]{morel} (see \cite[footnote p~2]{Hoyois_lef} about the non-perfect case), defined to be the group completion of the semi-ring of symmetric bilinear forms.  $\GW(k)$ is generated by $\langle a \rangle$ for $a \in k^*/(k^*)^2$ and has relations given by $\langle u \rangle + \langle -u \rangle = \langle 1 \rangle + \langle -1 \rangle$ and $\langle u \rangle + \langle v \rangle = \langle u + v\rangle + \langle (u+v)uv \rangle$ for $u,v\in k^*$ and $u + v \neq 0$. See \cite[Lemma 3.9]{morel} who cites \cite{milnor73}. The element $\langle a \rangle$ corresponds to the bilinear form $(x,y) \mapsto a xy$. We will compare $\GW(\bbb{R})$ and $\A(\Gal_{\bbb{R}})$ and for this, we need the following well-known constructions.

For a separable field extension $k \subseteq L$, there is a map $\Tr_{L/k}: \GW(L) \to \GW(k)$ defined by sending a symmetric bilinear form $b: V \times V \to L$ to the composition $\Tr_{L/k} b : V \times V \to k$ of the form $b$ with the field trace map $\Tr_{L/k}: L \to k$. In $\Tr_{L/k} b$, the vector space $V$ is now viewed as a vector space over $k$. 

The map sending a bilinear form to the ordered pair of its rank and signature determines an isomorphism $$\xymatrix{\GW(\bbb{R}) \ar[rrr]_{\cong}^{\text{rank} \times \text{signature}}&&& \Z \times \Z},$$ as can be checked using the generators and relations described above and Sylvester's law of inertia.

Since $LB$ is a functor, it determines a map $$LB: \GW(\bbb{R}) \cong \End_{\calH_{\bbb{A}^1}(\Spt^{\bbb{P}^1}(\bbb{R}))}(\Sphere) \to \End_{\calH_{\Gal_{\bbb{R}}}(\Spt)}(\Sphere) \cong \A(\Gal_{\bbb{R}}).$$  Since $LB$ is symmetric monoidal, $LB(\chi Y) = \chi(LB Y)$ for every strongly dualizable object $Y$ of $\calH_{\bbb{A}^1}(\Spt^{\bbb{P}^1}(\bbb{R}))$. It is a result of Hoyois \cite[Theorem 1.9]{Hoyois_lef} that for a separable field extension $k \subseteq L$, the fully dualizable spectrum $\Sigma^{\infty}_{\bbb{P}^1} \Spec L_+$ has Euler characteristic $\chi \Spec L = \Tr_{L/k} \langle 1 \rangle$. Thus in  $\End_{\calH_{\bbb{A}^1}(\Spt^{\bbb{P}^1}(\bbb{R}))}(\Sphere)$, we have that $\chi(\Sigma^{\infty}_{\bbb{P}^1} \Spec \C_+) = \langle 1 \rangle + \langle -1 \rangle$. Since $LB (\Sigma^{\infty}_{\bbb{P}^1} \Spec \C_+) $ is the element of $\calH_{\Gal_{\bbb{R}}}(\Spt)$ corresponding to the finite $ \Gal_{\bbb{R}}$-set given by $\Gal_{\bbb{R}}$ with left translation, it follows that the map $$ \GW(\bbb{R}) \to \A(\Gal_{\bbb{R}})$$ is determined by $$\xymatrix{  \GW(\bbb{R}) \ar[d]_{\text{rank} \times \text{signature}} \ar[r] &  \A(\Gal_{\bbb{R}}) \ar[d]^{(T \mapsto \vert T \vert) \times (T \mapsto \vert T^{\Gal_{\bbb{R}} }\vert)} \\ \bbb{Z}^2 \ar[r]^1 & \bbb{Z}^2},$$ and in particular is an isomorphism. Here the notation $\vert T \vert$ for a finite set $T$ denotes the cardinality of $T$. We remark that there is much more to say about the relationship between the Burnside ring and the Grothendieck-Witt group. See, for example, the recent work of Kyle Ormsby and Jeremiah Heller \cite{HO_Galois}. 

The following proposition about an elliptic curve is one consequence of the existence of a realization over $k=\mathbb{R}$.  In the next section, we show that the analogous result often fails to hold when $k$ is a more complicated field and then use this fact to show that a suitable realization functor cannot exist.

\begin{pr}\label{chirER=0}
Let $E$ be an elliptic curve over $\R$. Then \begin{enumerate}
\item \label{chiRepEC=0} For any ring $R$, we have $\sum_{i=0}^{2}(-1)^i H^i (E(\bbb{C}),R) = 0$ in $\Rep(\Gal_{\bbb{R}},R).$ 
\item \label{chiRepetEC=0} For a finite ring $R$, we have $\sum_{i=0}^{2}(-1)^i H^i_{\et}(E_{\bbb{C}},R) = 0$ in $\Rep(\Gal_{\bbb{R}},R).$ 
\end{enumerate}
\end{pr}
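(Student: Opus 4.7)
The strategy is to transport Hoyois's vanishing theorem for the $\bbb{A}^1$-Euler characteristic across the Morel--Voevodsky Betti realization, and then convert the resulting identity in $\A(\Gal_{\bbb{R}})$ into a cohomological identity via Proposition \ref{assocchiX=repchi}.

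First, since $E$ is a smooth proper variety whose cotangent sheaf is trivial, Hoyois's theorem \cite[Theorem 1.5]{Hoyois_lef} gives
\[
\chi(\Sigma^{\infty}_{\bbb{P}^1} E_+) \;=\; 0 \quad \text{in } \End_{\calH_{\bbb{A}^1}(\Spt^{\bbb{P}^1}(\bbb{R}))}(\Sphere) \cong \GW(\bbb{R}).
\]
The Betti realization $LB \colon \calH_{\bbb{A}^1}(\Spt^{\bbb{P}^1}(\bbb{R})) \to \calH_{\Gal_{\bbb{R}}}(\Spt)$ recalled just above is symmetric monoidal and carries $\Sigma^{\infty}_{\bbb{P}^1} E_+$ to $\Sigma^{\infty}_+ E(\bbb{C})$ with its tautological complex-conjugation action. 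Applying $LB$ to the identity above therefore gives $\chi(\Sigma^{\infty}_+ E(\bbb{C})) = 0$ in $\A(\Gal_{\bbb{R}})$.

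For assertion \eqref{chiRepEC=0}, note that $E(\bbb{C})$ is a compact smooth manifold equipped with a smooth $\Gal_{\bbb{R}}$-action, hence admits the structure of a finite $\Gal_{\bbb{R}}$-CW complex. Applying the ring map $\assoc \colon \A(\Gal_{\bbb{R}}) \to \Rep(\Gal_{\bbb{R}}, R)$ to the vanishing of $\chi(\Sigma^{\infty}_+ E(\bbb{C}))$ and invoking Proposition \ref{assocchiX=repchi}\eqref{pr:assocchiX=repchi:case1} yields
\[
0 \;=\; \assoc \chi(E(\bbb{C})) \;=\; \sum_{i=0}^{2} (-1)^i [H^i(E(\bbb{C}), R)] \quad \text{in } \Rep(\Gal_{\bbb{R}}, R),
\]
which is the desired identity.

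For assertion \eqref{chiRepetEC=0}, we reduce to \eqref{chiRepEC=0} via Artin's comparison theorem: for a finite coefficient ring $R$ there is a natural isomorphism $H^i_{\et}(E_{\bbb{C}}, R) \cong H^i(E(\bbb{C}), R)$, and naturality with respect to the complex-conjugation automorphism of $E_{\bbb{C}}$ inherited from the $\bbb{R}$-structure on $E$ makes it an isomorphism of $\Gal_{\bbb{R}}$-modules. The conclusion then follows from \eqref{chiRepEC=0}. The only subtle point in the whole argument is the Galois-equivariance of Artin's comparison, which is standard; the substance of the proof is the passage through $LB$ from Hoyois's vanishing followed by the application of Proposition \ref{assocchiX=repchi}\eqref{pr:assocchiX=repchi:case1}.
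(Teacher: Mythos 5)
Your proposal is correct and follows essentially the same route as the paper: Hoyois's vanishing of the $\bbb{A}^1$-Euler characteristic, transport through the symmetric monoidal Betti realization $LB$, application of Proposition~\ref{assocchiX=repchi}\eqref{pr:assocchiX=repchi:case1} using the finite $\Gal_{\bbb{R}}$-CW structure on $E(\bbb{C})$ (the paper cites Illman for this), and the equivariant Artin comparison isomorphism for the \'etale statement.
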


\begin{proof}
$E$ is strongly dualizable in $\calH_{\bbb{A}^1}(\spaces{\bbb{R}})$, and by \cite[Theorem 1.5]{Hoyois_lef}, the Euler characteristic of $E$ is $0$. Since $LB$ is a symmetric monoidal functor, it follows that  $LB E$ is strongly dualizable in $\calH_{\Gal_{\bbb{R}}}(\Spt)$ and that $\chi LB E = 0$.

\noindent \eqref{chiRepEC=0}: By \cite{Illman}, $LB E$ is a finite $\Gal_{\bbb{R}}$-CW complex, so we may apply Proposition~\ref{assocchiX=repchi} \eqref{pr:assocchiX=repchi:case1} to $LB E$. It follows that $ \sum_{i=-\infty}^{\infty} (-1)^i H^i(LB E,R) = 0$. Since $LB E = E(\C)$ and $H^i(E(\C), R) = 0$ for $i<0$ and $i>2$, we have $\sum_{i=0}^{2}(-1)^i H^i (E(\bbb{C}),R) = 0$ as claimed.

\noindent \eqref{chiRepetEC=0}: For $R$ finite, we have a natural isomorphism $H^i (E(\bbb{C}),R) \cong H^i_{\et}(E_{\bbb{C}},R) $ by \cite[III Theorem 3.12]{MilneECbook} \cite[XI]{sga4III}, so \eqref{chiRepetEC=0} follows from \eqref{chiRepEC=0}.
\end{proof}

\begin{rmk} We can also verify Proposition \eqref{chirER=0} directly. Indeed, when $R=\mathbb{C}$, the representations $H^{i}(E(\mathbb{C}), R)$ can be described explicitly as follows.
Let $\C$ denote the trivial representation and $\C(1)$ denote the sign representation in $\Rep(\Gal_{\bbb{R}},\C)$. Then in $\Rep(\Gal_{\bbb{R}},\C)$, we have equalities $H^0(E(\bbb{C}),\C) = \C$, $H^2(E(\bbb{C}),\C) = \C(1)$, $H^1(E(\bbb{C}),\C) = \C + \C(1)$. Indeed, the first equality follows because $\Gal_{\R}$ acts trivially on the single connected component of $E(\C)$; the second equality follows because the non-trivial element of $\Gal_{\R}$ reverses orientation; the third equality follows because the cup product gives an isomorphism of $\Gal_{\R}$-representations $\wedge^2 H^1(E(\bbb{C}),\C) \cong H^2(E(\bbb{C}),\C)$, and in the representation ring, $H^1(E(\bbb{C}),\C)$ is a direct sum of one-dimensional representations.
\end{rmk}

\section{Enriched Euler characteristics and restrictions on genuine $\Gal_{k}$-realization}\label{Gk-realization}

Now let $G$ be a profinite group, or more specifically a Galois group. One could hope to construct a homotopy category $\calH_{G}(\Spt)$ of genuine $G$-spectra or $G$-pro-spectra, and a Burnside ring $\A(G) = \End_{\calH_{G}(\Spt)}(\Sphere)$. For example, homotopy theories of $G$-spectra are constructed in \cite{Fausk} \cite{Quick-Profinite_G-spectra} and \cite{BarwickI}. We have in mind that there is some appropriate sort of $G$-set and corresponding Burnside category such that a suspension spectrum functor is fully faithful into $\calH_{G}(\Spt)$, and that objects in $\calH_{G}(\Spt)$ can be constructed from colimits of suspensions of these $G$-sets, as is the case when $G$ is finite. The point here being that the Euler characteristic of a $G$-set would be itself, and spaces would be built from $G$-sets, giving credibility to an analogue of Proposition \ref{assocchiX=repchi}. For any profinite group, a category of genuine $G$-equivariant spectra has been constructed from a Burnside category by Barwick in \cite{BarwickI}, and studied by Barwick, Glasman, and Shah in \cite{BGSII}.

Let $k$ be a field, and let $\kbar$ denote an algebraic closure of $k$ and $\Gal_k = \Gal(\kbar/k)$. One could furthermore hope to construct an  {\em \'etale realization functor} $$L \Et: \calH_{\bbb{A}^1}(\Spt^{\bbb{P}^1}(k)) \to \calH_{\Gal_k}(\Spt)$$ from the stable $\bbb{A}^1$-homotopy category of $\bbb{P}^1$-spectra over $k$ to genuine $\Gal_k$-spectra or pro-spectra, appropriately completed away from the characteristic of $k$. For example, Quick has constructed an \'etale realization functor from the stable $\bbb{A}^1$-homotopy category to a stable homotopy category of profinite spaces in \cite{Quick-stable_realization}.  \'Etale realization functors have been constructed and studied by Ayoub \cite{Ayoub-realization_etale} and \cite[Section~7.2]{CD-etale_motives} in a generalization of the following context. The category of Voevodsky motives is analogous to the stable $\bbb{A}^1$-category \cite[Section 2]{AyoubICM}.  Let $\Lambda$ be the ring $\Lambda= \bbb{Z}/\ell$ for a prime $\ell$ different from the characteristic of $k$, and assume that $k$ is perfect. Let $D(\Sheaves(k_{\et}, \Lambda))$ denote the derived category of sheaves of $\Lambda$-modules on the small \'etale site of $\Spec k$. There is an \'etale realization functor from Voevodsky motives to $D(\Sheaves(k_{\et}, \Lambda))$. Since a sheaf of $\Lambda$-modules on the small \'etale site of $\Spec k$ is a $\Lambda$-module with an action of $\Gal_k$, the derived category $D(\Sheaves(k_{\et}, \Lambda))$ is similar to spectra (of $H \Lambda$-modules) equipped with an action of $\Gal_k$. We wish to draw a similarity between $D(\Sheaves(k_{\et}, \Lambda))$ and a homotopy theory of spectra with a $\Gal_k$-action and contrast $D(\Sheaves(k_{\et}, \Lambda))$ with a notion of genuine $\Gal_k$-spectra. For example, the endomorphisms of the symmetric monoidal unit of $D(\Sheaves(k_{\et}, \Lambda))$ is $\Lambda$, in contrast to the Burnside ring. Since \'etale realization functors exist in powerful contexts, one could hope for an \'etale realization functor to genuine Galois equivariant spectra or pro-spectra.

We have in mind that applying the \'etale realization functor $L \Et$ to the suspension spectrum of a smooth scheme gives an appropriate suspension spectrum of the \'etale topological type of Artin-Mazur \cite{Artin-Mazur} and Friedlander \cite{Friedlander}. In other words, we have in mind that applying $L \Et$ is compatible with an unstable \'etale realization functor $L \Et$ as constructed by Isaksen \cite{Isaksen-etale_realization} in the non-equivariant context. From this, we are lead to the following two expectations.
\begin{itemize}
\item Since the \'etale topological type $\Et ((X \times Y)_{\kbar})$ is equivalent to the product $\Et (X_{\kbar}) \times \Et (Y_{\kbar})$, cf. \cite[Corollarie~1.11]{sga4andhalf}, it is reasonable to hope that $L \Et$ is symmetric monoidal.  
\item Since the \'etale cohomology of a smooth scheme $X$ with finite coefficients $R$ is isomorphic to the cohomology of the \'etale topological type $\Et X$ with coefficients in $R$,  this would result in an isomorphism of $\Gal_k$-representations $$H^i_{\et}(X_{\kbar}, R) \cong H^i(L \Et X_{\kbar}, R).$$ This implies that if the Euler characteristic of a genuine $G$-spectrum or pro-spectrum is connected to the $\Gal_k$-representations given by its cohomology groups as in Proposition \ref{assocchiX=repchi}, then applying a homomorphism $\assoc$ from $\A(G)$ to a representation ring sends this Euler characteristic to $\sum_{i=-\infty}^{\infty} H^i_{\et}(X_{\kbar}, R).$ 
\end{itemize}

The purpose of this paper is to show that, for many fields $k$, it is impossible to simultaneously satisfy these hopes, i.e., such an \'etale realization functor does not exist. 

\begin{tm}\label{tm:nonexistence}
Let $k$ be a global field, local field with infinite Galois group, or finite field. Let $p$ be a prime different from the characteristic of $k$. It is impossible to simultaneously construct all of the following \begin{enumerate}
\item A symmetric monoidal category $\calH_{\Gal_k}(\Spt)$, enriched over abelian groups. Let $\A(\Gal_k)= \End_{\calH_{\Gal_k}(\Spt)}(\Sphere)$ denote the endomorphisms of the symmetric monoidal unit $\Sphere$. 

\item For all $n$, a representation ring $\Rep( \Gal_k, \Z/{p^n})$ with coefficients in $\Z/{p^n}$. 

\item \label{thm:nonexistence:hyp:realization}  A symmetric monoidal additive functor $L \Et: \calH_{\bbb{A}^1}(\Spt^{\bbb{P}^1}(k)) \to \calH_{\Gal_k}(\Spt)$, and ring homomorphisms $\assoc_n: \A(G) \to \Rep(\Gal_k, \Z/p^n)$ such that $$\assoc_n \chi (L\Et \Sigma^{\infty}_{\bbb{P}^1}X_+) = \sum_{i=-\infty}^{\infty} (-1)^i [H^i_{\et} (X_{\kbar}, \Z/p^n)]$$ for smooth proper schemes $X$ over $k$.
\end{enumerate}

\end{tm}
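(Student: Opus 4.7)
The strategy is to pit the vanishing of the $\bbb{A}^1$-Euler characteristic of an elliptic curve, due to Hoyois, against the non-vanishing of its \'etale point count predicted by the Grothendieck--Lefschetz trace formula.

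Choose an elliptic curve $E/k$ admitting a place $v$ of good reduction whose residue characteristic differs from $p$. Such a choice is possible in each case of the theorem: over a finite field $k = \F_q$, take any $E/\F_q$ (with $v$ the unique place of $k$); over a global field, any $E/k$ has good reduction at cofinitely many places, only finitely many of which have residue characteristic $p$; over a local field with infinite Galois group, take $E/k$ with good reduction. Since $E$ is smooth, proper, with trivial cotangent sheaf, Hoyois \cite[Theorem~1.5]{Hoyois_lef} gives $\chi(\Sigma^\infty_{\bbb{P}^1} E_+) = 0$ in $\End_{\calH_{\bbb{A}^1}(\Spt^{\bbb{P}^1}(k))}(\Sphere)$. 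Because $L \Et$ is symmetric monoidal and additive, it preserves Euler characteristics of dualizable objects and sends the zero morphism to zero, so $\chi(L \Et\, \Sigma^\infty_{\bbb{P}^1} E_+) = 0$ in $\A(\Gal_k)$. Applying the ring homomorphism $\assoc_n$ and invoking hypothesis~\eqref{thm:nonexistence:hyp:realization} gives
$$\sum_{i=0}^{2}(-1)^i [H^i_\et(E_{\kbar},\Z/p^n)] = 0 \in \Rep(\Gal_k,\Z/p^n)$$
for every $n$. Post-composing with the trace homomorphism $\Rep(\Gal_k,\Z/p^n) \to \operatorname{Funct}(\Gal_k,\Z/p^n)$ of Definition~\ref{df:representation_ring}(2) yields
$$\sum_{i=0}^{2}(-1)^i \Tr\bigl(g \mid H^i_\et(E_{\kbar},\Z/p^n)\bigr) = 0 \in \Z/p^n$$
for every $g \in \Gal_k$ and every $n$.

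To finish I exhibit specific $g$ and $n$ making the left-hand side non-zero. Take $g$ to be a Frobenius element at $v$. Good reduction together with $p \neq \mathrm{char}\,\F_{q_v}$ implies that $H^\ast_\et(E_{\kbar},\Z/p^n)$ is unramified at $v$, so the Galois action factors through the Frobenius on the reduction $\tilde E_v$. The Grothendieck--Lefschetz trace formula identifies the displayed alternating sum with $\lvert \tilde E_v(\F_{q_v}) \rvert \bmod p^n$. This is a fixed positive integer (positive because $\tilde E_v$ carries the point at infinity), so choosing $n$ with $p^n > \lvert \tilde E_v(\F_{q_v}) \rvert$ produces a non-zero class equated to zero, the desired contradiction.

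The main obstacle is not any single computation but the bookkeeping that threads the axioms together: monoidality and additivity of $L \Et$ transport Hoyois's zero from $\GW(k)$ to $\A(\Gal_k)$, the ring-homomorphism property of $\assoc_n$ propagates it into $\Rep(\Gal_k,\Z/p^n)$, and the trace axiom of Definition~\ref{df:representation_ring}(2) converts it into a mod-$p^n$ trace identity accessible to the Lefschetz count. Once this cascade is in place, the elliptic curve delivers the contradiction essentially automatically.
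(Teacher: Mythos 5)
Your proof is correct, and it differs from the paper's argument at one essential point. After the common cascade (Hoyois's vanishing, monoidality of $L\Et$, the ring homomorphism $\assoc_n$, the trace axiom of Definition~\ref{df:representation_ring}), both arguments must show that for a Frobenius element $F$ acting on the \'etale cohomology of an elliptic curve over a finite field $\F_q$ (with $p\nmid q$), the alternating trace $\sum_i(-1)^i\Tr(F\mid H^i_\et(\tilde E_v,\Z/p^n))$ is nonzero for some $n$. The paper establishes this (Lemma~\ref{chirepEFellneq0pn}) by passing to the inverse limit over $n$, obtaining the identity $a_1+a_2 = 1+q$ in $\Q_p$, and then invoking the Riemann hypothesis for curves over finite fields ($|a_i|=q^{1/2}$) to derive $2q^{1/2}\geq 1+q$, a contradiction. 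You instead invoke the Grothendieck--Lefschetz trace formula to identify the alternating trace mod $p^n$ with the point count $|\tilde E_v(\F_{q_v})|\bmod p^n$, a fixed positive integer, and then choose $n$ large. Your route is the one the paper reserves for the variant Theorem~\ref{tm:nonexistencev2} (via Lemma~\ref{chirepEFellneq0p}), except that there the paper fixes $n=1$ and lets $p$ grow, whereas you fix $p$ and let $n$ grow. Your approach avoids the Weil/Riemann-hypothesis input entirely, which is a genuine simplification; what the paper's version buys is an argument that works uniformly for \emph{all} $n$ rather than requiring a specific large $n$. One small rigor point you should supply: the mod-$p^n$ Lefschetz formula you use is not the literal statement in the standard references, but it follows for an elliptic curve because $H^i_\et(\tilde E_v,\Z_p)$ is torsion-free, so $H^i_\et(\tilde E_v,\Z/p^n)$ is its reduction mod $p^n$ and the integral trace formula reduces term by term. (The paper cites a reference only for the mod-$p$ version.) Also note that your argument, like the paper's, implicitly requires the residue characteristic at $v$ to be prime to $p$; you make this explicit in the global case, but for a local field $k$ there is no freedom in the choice of $v$, so the argument silently excludes $p$ equal to the residue characteristic of $k$ (the paper has the same restriction built into its use of smooth-proper base change).
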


\begin{lm}\label{chirepEFellneq0pn}
Let $E$ be an elliptic curve over a finite field $\bbb{F}_{\ell}$, and let $p$ be any prime not dividing $\ell$. Suppose that for all $n$, we have a ring $\Rep( \Gal_k, \Z/{p^n})$ as in Theorem \ref{tm:nonexistence}. There exists a positive integer $n$, such that $\sum_{i=-\infty}^{\infty} (-1)^i [H^i_{\et} (E_{\kbar}, \Z/p^n)]$ is non-zero in $\Rep(\Gal_k,\bbb{Z}/p^n)$. 
\end{lm}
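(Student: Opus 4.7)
My plan is to exploit the trace homomorphism built into the definition of the representation ring, reducing the non-vanishing statement to a concrete integer computation via the Grothendieck--Lefschetz trace formula.

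Step 1 (reduction to traces). By Definition \ref{df:representation_ring}(2), there is a ring homomorphism $\Rep(\Gal_k,\mathbb{Z}/p^n) \to \operatorname{Funct}(\Gal_k,\mathbb{Z}/p^n)$ sending $[A]$ to $g \mapsto \Tr(g\vert A)$. To show that $\sum_{i=0}^{2} (-1)^i [H^i_{\et}(E_{\kbar},\mathbb{Z}/p^n)]$ is non-zero in $\Rep(\Gal_k,\mathbb{Z}/p^n)$, it therefore suffices to show that its image under the trace map is a non-zero function on $\Gal_k$, and for this it suffices to exhibit a single element $g \in \Gal_k$ at which the alternating sum of traces is non-zero in $\mathbb{Z}/p^n$.

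Step 2 (evaluate at Frobenius). Let $\mathrm{Frob} \in \Gal_{\mathbb{F}_\ell}$ denote the geometric (or arithmetic) Frobenius. By the Grothendieck--Lefschetz trace formula for the smooth proper variety $E$ over $\mathbb{F}_\ell$ with constant coefficients $\mathbb{Z}/p^n$ (with $p \neq \ell$), one has
\[
\sum_{i=0}^{2} (-1)^i \Tr\bigl(\mathrm{Frob} \,\big\vert\, H^i_{\et}(E_{\kbar},\mathbb{Z}/p^n)\bigr) \;\equiv\; \#E(\mathbb{F}_\ell) \pmod{p^n}.
\]
Thus the value of the trace function at $\mathrm{Frob}$ is the reduction mod $p^n$ of the integer $\#E(\mathbb{F}_\ell)$.

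Step 3 (non-vanishing via Hasse). By the Hasse bound, $\#E(\mathbb{F}_\ell) \geq \ell + 1 - 2\sqrt{\ell} > 0$, so $N := \#E(\mathbb{F}_\ell)$ is a positive integer. Choose $n$ large enough so that $p^n > N$ (for instance, $p^n > \ell + 1 + 2\sqrt{\ell}$ suffices uniformly). Then $N \not\equiv 0 \pmod{p^n}$, and combining with Steps 1--2 we conclude that $\sum_{i=0}^{2}(-1)^i[H^i_{\et}(E_{\kbar},\mathbb{Z}/p^n)]$ has non-zero image under the trace map, hence is itself non-zero in $\Rep(\Gal_k,\mathbb{Z}/p^n)$.

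The only real subtlety is verifying that the Grothendieck--Lefschetz trace formula applies with the torsion coefficients $\mathbb{Z}/p^n$ (rather than $\mathbb{Q}_\ell$ or $\mathbb{Z}_\ell$) and with the Galois-theoretic interpretation used in Definition \ref{df:representation_ring}; this is standard (see e.g.\ SGA~4$\frac{1}{2}$) and introduces no genuine difficulty. Everything else is either the definition of the representation ring or the elementary observation that positive integers bounded by $p^n$ are non-zero mod $p^n$.
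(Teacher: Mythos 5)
Your proof is correct, but it takes a genuinely different route from the paper's own proof of this lemma. The paper argues by contradiction: assuming the alternating sum vanishes for \emph{all} $n$, it passes to the inverse limit and tensors with $\Q_p$ to get the identity $\Tr F \vert H^1(E_{\kbar},\Q_p) = 1 + \ell$ for the geometric Frobenius $F$, then invokes the Weil conjectures (the eigenvalues $a_1, a_2$ of Frobenius on $H^1$ have $|a_i| = \ell^{1/2}$) to derive the impossible inequality $2\ell^{1/2} \geq 1 + \ell$. Your proof instead works entirely at finite level: you apply the trace homomorphism of Definition \ref{df:representation_ring}(2) directly, evaluate at Frobenius, and use the Lefschetz trace formula with $\Z/p^n$-coefficients to identify the result with $\#E(\F_\ell) \bmod p^n$; since $\#E(\F_\ell)$ is a fixed positive integer, choosing $n$ large enough that $p^n > \#E(\F_\ell)$ finishes the argument. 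Your route is more elementary in that it sidesteps the Weil conjectures and the passage to $\Q_p$-coefficients; it is, in fact, essentially the argument the paper itself uses for the companion Lemma \ref{chirepEFellneq0p} (the single-prime variant), transported to the $\Z/p^n$ setting by exploiting the freedom to choose $n$. Both proofs ultimately hinge on the same arithmetic input, the positivity and boundedness of $\#E(\F_\ell)$; the paper's $2\ell^{1/2} < 1 + \ell$ is just the Hasse bound in disguise. One small remark: for Step 3 you do not actually need the Hasse lower bound to conclude $\#E(\F_\ell) > 0$, since the identity element is always a rational point; the Hasse upper bound is only useful if you want a choice of $n$ uniform in $E$.
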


\begin{proof}
Suppose the contrary. Then for all $g$ in $\Gal_k$ and all $n$, we have the equality $$\Tr g \vert H^1(E_{\kbar}, \Z/p^n) = \Tr g \vert H^0(E_{\kbar}, \Z/p^n) + \Tr g \vert H^2(E_{\kbar}, \Z/p^n)$$ in $\Z/p^n$. By definition, $H^i(E_{\kbar}, \Q_p) = \Q_p \otimes \varprojlim_n H^i(E_{\kbar}, \Z/p^n)$ \cite[V I]{MilneECbook}. Therefore \begin{equation}\label{TrgHiEQp}\Tr g \vert H^1(E_{\kbar}, \Q_p) = \Tr g \vert H^0(E_{\kbar}, \Q_p) + \Tr g \vert H^2(E_{\kbar}, \Q_p)\end{equation} in $\Q_p$. Let $F$ be the geometric Frobenius in $\Gal_k$, i.e., the inverse of the arithmetic Frobenius $a \mapsto a^{\ell}$ for all $a$ in $\kbar$. By the Weil Conjectures, $\Tr F \vert H^1(E_{\kbar}, \Q_p) = a_1 + a_2$, where $a_i$ are algebraic integers with absolute value $\ell^{1/2}$  \cite[Theorem~IV.1.2]{Freitag_Kiehl}. Since $\Gal_k$ acts trivially on $H^0(E_{\kbar}, \Q_p)$ and there is an isomorphism $H^2(E_{\kbar}, \Q_p) \cong \Q_p(1)$, Equation~\eqref{TrgHiEQp} for $g=F$ becomes $$ a_1 + a_2 = 1+\ell.$$ Taking the absolute value of both sides, we have $$ 2 \ell^{1/2}=  \vert a_1 \vert + \vert a_2 \vert \geq \vert a_1 + a_2 \vert = 1 + \ell  ,$$ which is impossible for any prime power $\ell$.
\end{proof}

\begin{proof}
(of Theorem \ref{tm:nonexistence}) Suppose $k$ is a finite field $k=\bbb{F}_{\ell}$. Choose an elliptic curve over $k=\bbb{F}_{\ell}$. To see that this is indeed possible, note that the Weierstra\ss  ~equation $y^2 = x (x-1)(x+1)$ determines an elliptic curve when $\bbb{F}_{\ell}$ has odd characteristic. When $\ell=2^d$, the equation $y^2+y=x^3$ produces the desired elliptic curve. By Lemma \ref{chirepEFellneq0pn}, there exists a positive integer $n$  such that $\sum_{i=-\infty}^{\infty} (-1)^i [H^i_{\et} (E_{\kbar}, \Z/p^n)]$ is non-zero in $\Rep(\Gal_k,\bbb{Z}/p^n)$. By hypothesis \eqref{thm:nonexistence:hyp:realization} of the theorem, $\assoc_n \chi (L\Et \Sigma^{\infty}_{\bbb{P}^1}E_+) = \sum_{i=-\infty}^{\infty} (-1)^i [H^i_{\et} (E_{\kbar}, \Z/p^n)]$. It follows that $\chi (L\Et \Sigma^{\infty}_{\bbb{P}^1}E_+) $ is non-zero. Since $L \Et$ is a symmetric monoidal functor (by \eqref{thm:nonexistence:hyp:realization}), it follows that $L\Et \chi (\Sigma^{\infty}_{\bbb{P}^1}E_+) = \chi (L\Et \Sigma^{\infty}_{\bbb{P}^1}E_+)$, and therefore $\chi (\Sigma^{\infty}_{\bbb{P}^1}E_+)$ is non-zero. This contradicts \cite[Theorem 1.5]{Hoyois_lef}.

Suppose $k$ is a local field with infinite Galois group, i.e., $k$ is a finite extension of $\Q_{\ell}$ or $\bbb{F}_{\ell}((t))$, and let $\mathcal{O}_{k}$ denote the integral closure of $\Z_{\ell}$ or $\bbb{F}_{\ell}[[t]]$ in $k$. We may choose an elliptic curve $E$ over $k$ with good reduction by choosing an elliptic curve over the residue field (as discussed in the previous paragraph) and then applying \cite[III 7.3]{sga1}. Thus we have an elliptic curve $f: \mathcal{E} \to \Spec \mathcal{O}_k$. Let $E_0 = \mathcal{E} \times_{\Spec \mathcal{O}_k} \Spec \bbb{F}_{\ell}$ denote the fiber over the closed point. By smooth-proper base change, \cite[VI \S4 Corollary 4.2]{MilneECbook} $R^if_* \mathbb{Z}/p^n$ is a locally constant sheaf on $\Spec \mathcal{O}_k$ whose stalk at a geometric point $s: \Spec \Omega \to  \Spec \mathcal{O}_k$ is $H^i(\mathcal{E}_s, \mathbb{Z}/p^n)$, where $\mathcal{E}_s = \mathcal{E} \times_{\Spec \mathcal{O}_k} \Spec \Omega$. It follows that the $\Gal_{k}$-action on $H^i(E_{\kbar}, \Z/p^n)$ factors through the quotient map $$\Gal_{k} \to \Gal(k^{\text{un}}/k) \cong \pi_1(\Spec \mathcal{O}_k) \cong \Gal_{\bbb{F}_{\ell^d}},$$ where $k^{\text{un}}$ is the maximal unramified extension of $k$ and $\bbb{F}_{\ell^d}$ is the residue field. It likewise follows that $H^i(E_{\kbar}, \Z/p^n)$ is isomorphic to $H^i((E_0)_{\overline{\bbb{F}_{\ell}}}, \Z/p^n)$ as $\Gal_{\bbb{F}_{\ell^d}}$-modules. This reduces the claim to the case where $k$ is a finite field.

Lastly, suppose that $k$ is a global field. Choose an elliptic curve $E$ over $k$, and let $\nu$ be a place not dividing the discriminant, so $E$ has good reduction at $\nu$. Let $k_{\nu}$ be the completion of $k$ at $\nu$. The decomposition group of $\nu$ in $\Gal_k$ gives an injection $\Gal_{k_{\nu}} \to \Gal_k$, and the $\Gal_{k_{\nu}}$-action on $H^i(E_{\kbar}, \Z/p^n)$ is isomorphic to the $\Gal_{k_{\nu}}$-action on $H^i(E_{\overline{k_{\nu}}}, \Z/p^n)$. By the previous paragraph, we may again reduce  the claim to the case where $k$ is a finite field. 
\end{proof}

We give a slight variation on Theorem \ref{tm:nonexistence}, where only the representation rings $\Rep( \Gal_k, \Z/p)$ with $p$ prime appear.

\begin{tm}\label{tm:nonexistencev2}

Let $k$ be a global field, a local field with infinite Galois group, or a finite field. It is impossible to simultaneously construct all of the following \begin{enumerate}
\item A symmetric monoidal category $\calH_{\Gal_k}(\Spt)$, enriched over abelian groups. Let $\A(\Gal_k)= \End_{\calH_{\Gal_k}(\Spt)}(\Sphere)$ denote the endomorphisms of the symmetric monoidal unit $\Sphere$. 

\item \label{trassumption2} For infinitely many primes $p$, a representation ring $\Rep( \Gal_k, \Z/p)$ with coefficients in $\Z/p$.

\item \label{trassumption3} A symmetric monoidal additive functor $L \Et: \calH_{\bbb{A}^1}(\Spt^{\bbb{P}^1}(k)) \to \calH_{\Gal_k}(\Spt)$, and a homomorphism of rings $\assoc: \A(G) \to \Rep(\Gal_k, \Z/p)$ such that $\assoc \chi (L\Et \Sigma^{\infty}_{\bbb{P}^1}X_+) = \sum_{i=-\infty}^{\infty} (-1)^i H^i_{\et} (X_{\kbar}, \Z/p)$ for smooth proper schemes $X$ over $k$.
\end{enumerate}
\end{tm}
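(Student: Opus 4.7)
The plan is to follow the argument of Theorem \ref{tm:nonexistence} essentially verbatim, replacing only Lemma \ref{chirepEFellneq0pn} with a purely mod-$p$ version valid for cofinitely many primes $p$. This weaker lemma suffices precisely because hypothesis \eqref{trassumption2} supplies representation rings for an infinite set of primes, any one of which can be chosen to avoid a prescribed finite set of exceptions.

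Step one is the mod-$p$ lemma: for an elliptic curve $E$ over a finite field $\F_\ell$ and any prime $p$ with $p \neq \ell$ and $p \nmid \#E(\F_\ell)$, the alternating sum $\sum_{i=0}^{2}(-1)^i [H^i_{\et}(E_{\overline{\F_\ell}}, \Z/p)]$ is nonzero in $\Rep(\Gal_{\F_\ell}, \Z/p)$. I would establish this by applying the trace homomorphism of Definition \ref{df:representation_ring}(2) and evaluating at geometric Frobenius $F$. On $H^0$ Frobenius acts trivially with trace $1$; on $H^2_{\et}(E_{\overline{\F_\ell}}, \Z/p) \cong \mu_p^{\otimes -1}$ it acts by multiplication by $\ell$; and on $H^1$ it has trace $a = \ell+1-\#E(\F_\ell)$ by the Lefschetz trace formula. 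Thus $F$ acts on the alternating sum with trace $1 - a + \ell \equiv \#E(\F_\ell) \pmod p$, which is a nonzero element of $\Z/p$ by hypothesis on $p$, so the class cannot vanish in the representation ring.

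Step two handles the finite-field case of the theorem: pick an elliptic curve $E$ over $k = \F_\ell$ using the same Weierstrass equations as in the proof of Theorem \ref{tm:nonexistence}, then select a prime $p$ in the infinite set of \eqref{trassumption2} avoiding the finitely many primes dividing $\ell \cdot \#E(\F_\ell)$. The lemma produces a nonzero Euler characteristic in $\Rep(\Gal_k, \Z/p)$; on the other hand, \eqref{trassumption3} combined with the symmetric monoidality of $L\Et$ and Hoyois's vanishing $\chi(\Sigma^{\infty}_{\bbb{P}^1} E_+) = 0$ in $\GW(k)$ \cite[Theorem 1.5]{Hoyois_lef} forces it to be zero, a contradiction.

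The local and global cases reduce to the finite case by the same arguments as in Theorem \ref{tm:nonexistence}: for a local field with infinite Galois group, choose a good-reduction model of an elliptic curve and use smooth--proper base change to identify the $\Gal_k$-action on $H^i_{\et}(E_{\kbar}, \Z/p)$ with the residue-field Galois action on the special fiber's cohomology, then lift a Frobenius to $\Gal_k$ to import the trace computation; for a global field, restrict to the decomposition group at a place of good reduction. I do not expect a substantive obstacle anywhere, since the content of the modification is confined to the trace computation for geometric Frobenius, and this is in fact cleaner than its $p$-adic counterpart in Lemma \ref{chirepEFellneq0pn} because it uses the integer identity $\#E(\F_\ell) \geq 1$ directly, bypassing the Weil absolute-value estimate.
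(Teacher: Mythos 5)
Your proposal is correct and follows the paper's proof essentially verbatim: the paper's Lemma~\ref{chirepEFellneq0p} makes the same reduction to the Lefschetz trace formula mod $p$ (using the slightly cruder hypothesis $p > \#E(\F_\ell)$ where you use $p \nmid \#E(\F_\ell)$, but both do the job), and the local and global cases are handled by the same descent to the finite-field case as in Theorem~\ref{tm:nonexistence}. The only cosmetic difference is that you tabulate the Frobenius traces on $H^0$, $H^1$, $H^2$ individually before summing, whereas the paper quotes the alternating-sum form of the mod $p$ Lefschetz trace formula directly (with a reference); your route as written is mildly circular in spirit, since the value you assign to $\Tr F\vert H^1$ already comes from the trace formula you then reassemble, but the underlying computation is valid.
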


\begin{proof}
Replace Lemma \ref{chirepEFellneq0pn} with Lemma \ref{chirepEFellneq0p} below in the proof of Theorem \ref{tm:nonexistence}.
\end{proof}

\begin{lm}\label{chirepEFellneq0p}
Let $E$ be an elliptic curve over a finite field $\bbb{F}_{\ell}$. Let $p$ be any prime greater than the cardinality of $E(\bbb{F}_{\ell})$. Then $\sum_{i=-\infty}^{\infty} (-1)^i [H^i_{\et} (E_{\kbar}, \Z/p) ]$ is non-zero in $\Rep(\Gal_k,\bbb{Z}/p)$. 
\end{lm}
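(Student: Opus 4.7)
The approach mirrors the proof of Lemma~\ref{chirepEFellneq0pn} but works directly at finite level: instead of lifting the trace identity to $\Q_p$ and invoking the Weil estimate $|a_i| = \ell^{1/2}$, I would identify the alternating sum of Frobenius traces on mod $p$ cohomology with a point count via the Grothendieck--Lefschetz trace formula, and then use the size hypothesis on $p$ to obtain a contradiction.

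In more detail, I would suppose for contradiction that $\sum_{i}(-1)^i [H^i_{\et}(E_{\kbar}, \Z/p)] = 0$ in $\Rep(\Gal_k, \Z/p)$. Applying property~(2) of Definition~\ref{df:representation_ring} and evaluating the resulting trace function at the geometric Frobenius $F \in \Gal_{\bbb{F}_{\ell}}$ then forces
\[
\sum_{i=0}^{2}(-1)^i \Tr\bigl(F \,\vert\, H^i_{\et}(E_{\kbar}, \Z/p)\bigr) = 0 \qquad \text{in } \Z/p.
\]
By the Grothendieck--Lefschetz trace formula for \'etale cohomology with $\Z/p$-coefficients (valid because $p \neq \ell$), the left-hand side equals the image of $|E(\bbb{F}_{\ell})|$ in $\Z/p$. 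Since $E(\bbb{F}_{\ell})$ contains the identity we have $1 \leq |E(\bbb{F}_{\ell})|$, and combined with the hypothesis $p > |E(\bbb{F}_{\ell})|$ this yields $|E(\bbb{F}_{\ell})| \not\equiv 0 \pmod{p}$, the desired contradiction.

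The only step requiring care is the trace formula with finite coefficients. This can be cited directly from standard references on \'etale cohomology, or derived from the $\Q_p$-form: since $H^i_{\et}(E_{\kbar}, \Z_p)$ is torsion-free for an elliptic curve, one has $H^i_{\et}(E_{\kbar}, \Z/p) \cong H^i_{\et}(E_{\kbar}, \Z_p)/p$ as Galois modules, so the Frobenius traces in $\Z/p$ are the mod $p$ reductions of the integers $1$, $a_1 + a_2$, and $\ell$ for $i = 0, 1, 2$ respectively, which sum to $|E(\bbb{F}_{\ell})|$ by the Weil conjectures. Everything else in the argument is immediate from the hypothesis $|E(\bbb{F}_{\ell})| < p$.
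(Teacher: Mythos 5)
Your proof is correct and follows essentially the same route as the paper's: contrapositive via the trace homomorphism of Definition~\ref{df:representation_ring}, evaluation at geometric Frobenius, the mod~$p$ Lefschetz trace formula to identify the alternating trace sum with $|E(\bbb{F}_\ell)| \pmod p$, and the size hypothesis $1 \le |E(\bbb{F}_\ell)| < p$ to conclude. The only difference is cosmetic: where the paper cites references for the finite-coefficient trace formula, you supply a direct justification via torsion-freeness of $H^i_{\et}(E_{\kbar},\Z_p)$, which is a valid and self-contained alternative.
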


\begin{proof}

Let $k= \bbb{F}_{\ell}$ and let $F$ be the geometric Frobenius in $\Gal_k$, i.e., the inverse of the arithmetic Frobenius $a \mapsto a^{\ell}$ for all $a$ in $\kbar$. By assumption \eqref{trassumption2}, it suffices to show that $\sum_{i=0}^{2} (-1)^i \Tr F\vert  H^i_{\et} (E_{\kbar}, \Z/p) $ is non-zero in $\Z/p$. By \cite[pg.~292]{MilneECbook} the trace of $F$ equals the trace of the Frobenius endomorphism of $E$. By the Lefschetz Trace Formula,  $$\sum_{i=0}^{2} (-1)^i \Tr F\vert  H^i_{\et} (E_{\kbar}, \Z/p) = \vert E(\bbb{F}_{\ell}) \vert$$ in $\Z/p$  (see \cite[Proposition~7.1]{WIN3-2} for further explanation on the mod $p$ version; it is contained in \cite{sga4andhalf} and can also be deduced from \cite{Hoyois_lef}). Since $p$ is greater than $ \vert E(\bbb{F}_{\ell}) \vert$ by assumption and all elliptic curves contain at least one rational point, it follows that $\sum_{i=0}^{2} (-1)^i \Tr F\vert  H^i_{\et} (E_{\kbar}, \Z/p)$ is non-zero as desired.
\end{proof}

\begin{rmk}
By Section \ref{GR-realization}, when $k=\R$, there does exist a symmetric monoidal category $\calH_{\Gal_k}(\Spt)$ and a symmetric monoidal functor $L \Et: \calH_{\bbb{A}^1}(\Spt^{\bbb{P}^1}(k)) \to \calH_{\Gal_k}(\Spt)$ satisfying the conditions of Theorems \ref{tm:nonexistence} and \ref{tm:nonexistencev2}. 
\end{rmk}

\section{Acknowledgements}
We wish to thank Joseph Ayoub, Gunnar Carlsson, Akhil Mathew,  and all the participants of the AIM workshop on Derived Equivariant Algebraic Geometry, June 13--17, 2016, organized by Andrew Blumberg, Teena Gerhardt, Michael Hill, and Kyle Ormsby, and additionally including the participation of Clark Barwick, Elden Elmanto, Saul Glasman, Jeremiah Heller, Denis Nardin, and Jay Shah. It was a pleasure discussing these ideas with you. It is also a pleasure to thank Serge Bouc for useful correspondence concerning Corollary \ref{co:associdempAg}, and Alexander Duncan for writing and running some GAP code to check an example of this corollary. The second named author also wishes to thank the Institut Mittag--Leffler for their hospitality during their special program ``Algebro-Geometric and Homotopical Methods," where she worked on this paper, and Tom Bachmann for interesting comments.

Jesse Leo Kass was partially sponsored by the Simons Foundation under Grant Number 429929, and the National Security Agency under Grant Number H98230-15-1-0264.  The United States Government is authorized to reproduce and distribute reprints notwithstanding any copyright notation herein. This manuscript is submitted for publication with the understanding that the United States Government is authorized to reproduce and distribute reprints.

Kirsten Wickelgren was partially supported by National Science Foundation Awards DMS-1406380 and DMS-1552730, and an AIM 5-year fellowship. She wishes to thank the Institut Mittag-Leffler for a very pleasant stay at the special program ÒAlgebro-geometric and homotopical methods,Ó while working on this project.
 
} 

\bibliographystyle{alpha}

\bibliography{EtaleRealization}

\end{document}